\newtheorem*{rep@theorem}{\rep@title}
\newcommand{\newreptheorem}[2]{%
\newenvironment{rep#1}[1]{%
 \def\rep@title{#2 \ref{##1}}%
 \begin{rep@theorem}}%
 {\end{rep@theorem}}}
\newtheorem{theorem}{Theorem}[section]
\newtheorem{lemma}[theorem]{Lemma}
\newtheorem{corollary}[theorem]{Corollary}
\newtheorem{example}[theorem]{Example}
\newtheorem{definition}[theorem]{Definition}
\providecommand*{\cupdot}{%
  \mathbin{%
    \mathpalette\@cupdot{}%
  }%
}
\newcommand*{\@cupdot}[2]{%
  \ooalign{%
    $\m@th#1\cup$\cr
    \sbox0{$#1\cup$}%
    \dimen@=\ht0 %
    \sbox0{$\m@th#1\cdot$}%
    \advance\dimen@ by -\ht0 %
    \dimen@=.5\dimen@
    \hidewidth\raise\dimen@\box0\hidewidth
  }%
}
\providecommand*{\bigcupdot}{%
  \mathop{%
    \vphantom{\bigcup}%
    \mathpalette\@bigcupdot{}%
  }%
}
\newcommand*{\@bigcupdot}[2]{%
  \ooalign{%
    $\m@th#1\bigcup$\cr
    \sbox0{$#1\bigcup$}%
    \dimen@=\ht0 %
    \advance\dimen@ by -\dp0 %
    \sbox0{\scalebox{2}{$\m@th#1\cdot$}}%
    \advance\dimen@ by -\ht0 %
    \dimen@=.5\dimen@
    \hidewidth\raise\dimen@\box0\hidewidth
  }%
}
\newcommand{\F}{\mathcal{F}}
\newcommand{\BS}{\mathbb{S}}
\newcounter{fignum}
\begin{document}

\title[Knots of the flat plumbing basket number $6$]{The complete list of prime knots whose flat plumbing basket numbers are $6$ or less}

\author{Yoon-Ho Choi}
\address{School of Computer Science \& Engineering \\ Pusan National University
\\ Pusan, 609-735 Korea}
\email{choi.yuno@gmail.com}

\author{Yun Ki Chung}
\address{Gyeonggi Science High School \\Suwon, 440-800, Korea}
\email{jyg9628@nate.com}

\author{Dongseok Kim}
\address{Department of Mathematics \\Kyonggi University
\\ Suwon, 443-760 Korea}
\email{dongseok@kgu.ac.kr}

\keywords{Flat plumbing basket surfaces, Seifert surfaces, banded surfaces}

\subjclass[2000]{57M25, 57M27}

\maketitle

\begin{abstract}
Flat plumbing basket surfaces of links were introduced
to study the geometry of the complement of the links.
These flat plumbing basket surface can be presented by a sequential presentation
known as flat plumbing basket code first found by Furihata, Hirasawa and Kobayashi.
The minimum number of flat plumbings to obtain a
flat plumbing basket surfaces of a link
is defined to be \emph{the flat plumbing basket number}
of the given link.
In present article, we use these sequential presentations to find the
complete classification theorem of prime knots whose flat plumbing basket number $6$ or less.
As applications, this result improves the work of Hirose and Nakashima
which finds the flat plumbing basket number of prime knots up to $9$ crossings.
\end{abstract}

\keywords{Seifert surfaces, banded surfaces, flat plumbing basket surfaces, flat plumbing basket number}

\subjclass[2000]{57M25, 57M27}

\section{Introduction}

Orientable surfaces whose boundary is the given link,
known as \emph{Seifert surfaces} have been studied
for many interesting invariants of links such as Seifert pairings, Alexander polynomials, signatures and etc.
A \emph{plumbing surface} obtained from a $2$-dimensional disc by plumbings annuli found by
Rudolph~\cite{Rudolph:plumbing} used to
study extensively for the fibreness of links and surfaces
~\cite{FHK:openbook, Gabai:murasugi1, Gabai:murasugi2, harer:const, Nakamura,
Rudolph:quasipositive2, Stallings:const}.
In particular, if we only use flat annuli plumbings, the resulting surface is
called a \emph{flat plumbing surface}. The main focus of the present article is
flat plumbing basket surfaces, a precise definition can be found in Definition~\ref{defi1}.
A flat plumbing basket surface can be regarded as a flat plumbing surface, but not
vice versa. There exists a Seifert surface which is obtained from a disk by successively
plumbing flat annuli, but which is
not isotopic to any flat plumbing basket surface~\cite{FHK:openbook}.

The third author's first preprint about these plumbing surfaces from a canonical
Seifert surface had a critical mistake. In the process of resolving this mistake,
the third author, Kwon and Lee proved the existence of banded surfaces and flat banded surfaces~\cite{KKL:string}
by weakening some conditions of plumbings.
The third author also proved that every link $L$ is the boundary of an oriented surface
which is obtained from a graph embedding of a dipole graph,
this surface is also known as a braidzel surface~\cite{Nakamura:braidzel},
and a complete bipartite graph $K_{2,n}$,
where all voltage assignments on the edges of dipole graph and $K_{2,n}$ are $0$~\cite{Kim:dipole}.
The mistake was finally fixed in~\cite{Kim:flat}.

The present work is one of articles in this series of results presenting
links as a boundary of the surface obtained in a
embedding of certain graphs as described in~\cite{GT1}. One might consider
these plumbing surfaces as special embeddings of the bouquets of circles~\cite{GRT}.

A sequence of articles by Furihata, Hirasawa and Kobayashi \cite{FHK:openbook} and the third author~\cite{Kim:flat} proved
the existence of a flat plumbing basket surface of a given link $L$.
We can define the \emph{flat plumbing basket number} of $L$, denoted
by $fpbk(L)$, to be the minimal number of flat annuli to obtain a
flat plumbing basket surface of the link $L$.
However, finding the flat plumbing basket number of a link $L$ is very difficult and has been
beyond the reach for more than $15$ years from the first invention by
Rudolph~\cite{Rudolph:plumbing}.
because it is defined to be the minimum over all possible
flat plumbing basket surfaces whose boundaries is the given link $L$.

The work of Furihata \emph{et al.}~\cite{FHK:openbook} provided not only the existence theorem
using a very tangible alternating definition
of the flat plumbing basket surface but also
a coding algorithm, the resulting code is called
\emph{flat plumbing basket code}, to present links
as the boundaries of flat plumbing basket surfaces from a special
closed braid presentation of the link.

In present article, we use these sequential codes to find
all prime knots of the flat plumbing basket number $6$ in Theorem~\ref{6classification}
by applying DT-code and a computer
program ``knotfinder" and ``knotscape" of {\tt{Knotscape}}~\cite{Thistlethwaite:knotscape}.

When the third author first presented this work at the TAPU conference in 2013 summer,
Carter pointed out that this flat plumbing basket code
of a link can be very useful to calculate Alexander polynomials
because all components in Seifert matrix can be found directly from the
presentation and they are either $0$ or $\pm1$. A very recent work by
Hirose and Nakashima~\cite{HN} found a theorem which provide two lower bounds
of the flat plumbing basket number using Alexander polynomials and genera of links.
Using these lower bounds, they succeed to find the flat plumbing basket
number of all prime knots up to $9$ crossings except $24$ knots.

As an application of our classification theorem,
we find the flat plumbing basket number of
five knots out of $24$ knots and sharpens the range of
the flat plumbing basket number of three knots.

The outline of this paper is as follows. We first provide
some preliminary definitions and results in Section~\ref{prelim}.
We provided an explicit coding algorithm to find the flat plumbing basket
presentation of a link from its braid presentation and
canonical Seifert surface.
Also we provide two classification theorems of the flat plumbing basket number of $4$
and $6$ with a explanation how we find DT-code and use the computer
program ``knotfinder" of {\tt{Knotscape}} in Section~\ref{result}.
We conclude with a remark on further research in Section~\ref{conclusion}.

\section{Preliminaries} \label{prelim}

A compact orientable surface $\F$ is called a \emph{Seifert surface}
of a link $L$ if the boundary of $\F$ is isotopic to the given link
$L$. The existence of such a surface was first proven by Seifert
using an algorithm on a diagram of $L$, this algorithm was named after him as
\emph{Seifert's algorithm}~\cite{Seifert:def}.
A Seifert surface $\F_L$ of an oriented link $L$ produced by applying Seifert's
algorithm to a link diagram is called a \emph{canonical Seifert surface}.

\begin{figure}
$$
\begin{pspicture}[shift=-1.6](-.2,-2.2)(5,1.7)
\psline[linewidth=2pt](0,-1.5)(.8,-.5)
\psline[linecolor=darkgray, linewidth=1.5pt](1.2,0)(4.5,0)
\psline[linestyle=dashed, linewidth=2pt](.8,-.5)(2,1)
\psline[linewidth=2pt](2.4,1.5)(2,1)
\pccurve[angleA=110,angleB=180](.8,-.5)(1.4,1)
\pccurve[linestyle=dashed, angleA=0,angleB=90](1.4,1)(1.6,.5)
\psline(1.4,1)(4.4,1)
\pccurve[angleA=110,angleB=180](4.1,-.5)(4.4,1)
\pccurve[angleA=0,angleB=90](4.4,1)(4.9,.5)
\psline[linewidth=2pt](3.3,-1.5)(5.7,1.5)
\psline(.8,-.5)(4.1,-.5) \psline[linestyle=dashed](1.6,.5)(4.9,.5)
\pccurve[angleA=200,angleB=0](5.7,1.5)(4.6,1.7)
\pccurve[angleA=180,angleB=0](4.6,1.7)(3.5,1.3)
\pccurve[angleA=180,angleB=-20](3.5,1.3)(2.4,1.5)
\pccurve[angleA=200,angleB=0](3.3,-1.5)(2.2,-1.7)
\pccurve[angleA=180,angleB=0](2.2,-1.7)(1.1,-1.3)
\pccurve[angleA=180,angleB=-20](1.1,-1.3)(0,-1.5)
\rput(3,.2){$\alpha$} \rput(2.5,-.28){$C_{\alpha}$}
\rput(2,-1){$S$} \rput(1.25,.6){$B_{\alpha}$}
\rput[t](2.7,-2){$(a)$}
\end{pspicture}\quad
\begin{pspicture}[shift=-1.6](.8,-2.2)(6,1.7)
\psline[linewidth=2pt](0,-1.5)(.8,-.5)
\psline(.8,-.5)(1.6,.5) \psline[linewidth=2pt](1.6,.5)(2.4,1.5)
\pccurve[linewidth=2pt, angleA=45,angleB=135](1.6,.5)(4.9,.5)
\pccurve[linewidth=2pt, angleA=45,angleB=135](.8,-.5)(4.1,-.5)
\psline[linewidth=2pt](3.3,-1.5)(4.1,-.5)
\psline(4.1,-.5)(4.9,.5) \psline[linewidth=2pt](4.9,.5)(5.7,1.5)
\psline[linestyle=dotted](.8,-.5)(4.1,-.5)
\psline[linestyle=dotted](1.6,.5)(4.9,.5)
\pccurve[angleA=200,angleB=0](5.7,1.5)(4.6,1.7)
\pccurve[angleA=180,angleB=0](4.6,1.7)(3.5,1.3)
\pccurve[angleA=180,angleB=-20](3.5,1.3)(2.4,1.5)
\pccurve[angleA=200,angleB=0](3.3,-1.5)(2.2,-1.7)
\pccurve[angleA=180,angleB=0](2.2,-1.7)(1.1,-1.3)
\pccurve[angleA=180,angleB=-20](1.1,-1.3)(0,-1.5)
\psline(4.2,.2)(5.2,.2) \psline[arrowscale=1.5]{->}(5.18,.2)(5.2,.2)
\rput(2.5,-.28){$C_{\alpha}$} \rput(5.5,.2){$A_{0}$}
\rput(2,-1){$\overline{S}$}
\rput[t](2.7,-2){$(b)$}
\end{pspicture}
$$
%\vspace*{8pt}
\caption{$(a)$ A geometric shape of $\alpha, B_{\alpha}$
and $C_{\alpha}$ on a Seifert surface $S$
and $(b)$ a new Seifert surface $\overline{S}$ obtained
from $S$ by a top $A_0$ plumbing along the path
$\alpha$.}
\label{topfig}
\end{figure}
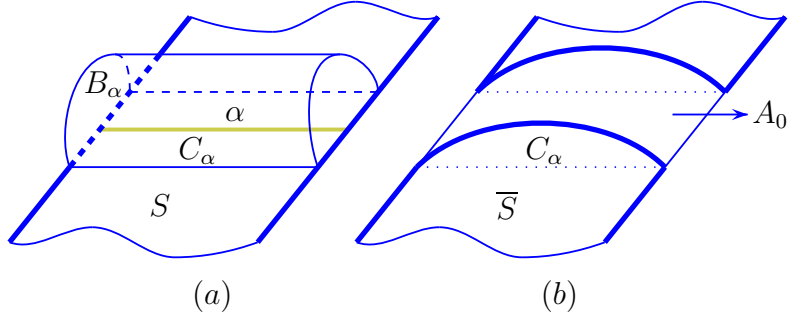

The main topic of the article is the flat plumbing basket surfaces.
Rudolph~\cite{Rudolph:plumbing} first defined the top plumbing as follows.
Let $\alpha$ be a proper arc on a Seifert surface $S$. Let $B_{\alpha}$ be a $3$-cell on
the positive side of the oriented surface $S$ along a tubular neighborhood $C_{\alpha}$ of $\alpha$ on $S$.
Let $A_n\subset B_{\alpha}$ be an $n$ times full twisted annulus such that
$A_n \cap \partial B_{\alpha}= C_{\alpha}$. The \emph{top plumbing} on $S$ along a path $\alpha$
is the new surface $S'= S \cup C_{\alpha}$ where $A_n, B_{\alpha}, C_{\alpha}$ satisfy
the previous conditions as depicted in Fig.~\ref{topfig}.
Thus, two consecutive plumbings are non-commutative in general.
Rudolph found a few interesting results with regards to the top and bottom
plumbings in~\cite{Rudolph:plumbing}. For the rest of article, all plumbings are top plumbing unless state differently.

\begin{definition} \label{defi1}
A Seifert surface $\F$ is a \emph{flat plumbing basket surface} if
$\F = D_2$ or if $\F = \F_0
*_{\alpha} A_0$ which can be constructed by plumbing $A_0$ to a flat plumbing
basket surface $\F_0$ along a proper arc $\alpha \subset D_2\subset \F_0$.
We say that a link $L$ admits a \emph{flat plumbing basket
representation} if there exists a flat plumbing basket surface $\F$ such that
$\partial \F$ is equivalent to $L$.
\end{definition}

An alternative definition of the flat plumbing basket
surfaces is given in~\cite{FHK:openbook}
and it is very easy to follow. The \emph{trivial
open book decomposition} of $\mathbb{R}^3$
is a decomposition of $\mathbb{R}^3$ into the half planes
in the following form. In a cylindrical coordinate, it can be presented
$$ \mathbb{R}^3 = \bigcup_{\theta \in [0, 2\pi)}
\{(r, \theta, z) | r \ge 0, z \in \mathbb{R} \}$$
where $\{(r, \theta, z) | r \ge 0, z \in \mathbb{R} \}$
is called a \emph{page} for $\theta \in [0, 2\pi)$.
Let $\mathcal{O}$ be the \emph{trivial open book decomposition} of the
$3$-sphere $\BS^3$ which is obtained from the trivial
open book decomposition of $\mathbb{R}^3$
by the one point compactification. A Seifert surface
is said to be a flat plumbing basket surface
if it consists of a single page of $\mathcal{O}$ as a $2$-disc $D^2$ and
finitely many bands which are
embedded in distinct pages~\cite{FHK:openbook}.
Flat plumbing basket surfaces
of $(i)$ the trefoil knot and $(ii)$ the figure
eight knot in the trivial open book decomposition are depicted
in Fig.~\ref{figure84band} where $D^2$ is presented as a shaded
rectangular region and the top horizontal line of
the rectangle is in the $z$-axis and the top
hemi-spherical annuli are contained in different pages.

\begin{figure}
$$
\begin{pspicture}[shift=-1.2](-.7,-1.8)(4.2,1.2)
\psarc[doubleline=true](2.5,0){1}{-5}{185}
\psarc[doubleline=true](2,0){1}{-5}{185}
\psarc[doubleline=true](1.5,0){1}{-5}{185}
\psarc[doubleline=true](1,0){1}{-5}{185}
\psframe[linecolor=lightgray,fillstyle=solid,fillcolor=lightgray](-.5,-1)(4,0)
\psline(-.03,0)(-.5,0)(-.5,-1)(4,-1)(4,0)(3.53,0)
\psline(.03,0)(.47,0) \psline(.53,0)(.97,0) \psline(1.03,0)(1.47,0)
\psline(1.53,0)(1.97,0) \psline(2.03,0)(2.47,0) \psline(2.53,0)(2.97,0)
\psline(3.03,0)(3.47,0)
\rput(1.75,-.5){{$\mathcal{D}$}}
\rput(1.75,-1.5){{$(a)$}}
\end{pspicture}
\hskip 1cm
\begin{pspicture}[shift=-1.2](-.7,-1.8)(4.2,1.2)
\psarc[doubleline=true](2,0){1}{-5}{185}
\psarc[doubleline=true](2.5,0){1}{-5}{185}
\psarc[doubleline=true](1.5,0){1}{-5}{185}
\psarc[doubleline=true](1,0){1}{-5}{185}
\psframe[linecolor=lightgray,fillstyle=solid,fillcolor=lightgray](-.5,-1)(4,0)
\psline(-.03,0)(-.5,0)(-.5,-1)(4,-1)(4,0)(3.53,0)
\psline(.03,0)(.47,0) \psline(.53,0)(.97,0) \psline(1.03,0)(1.47,0)
\psline(1.53,0)(1.97,0) \psline(2.03,0)(2.47,0) \psline(2.53,0)(2.97,0)
\psline(3.03,0)(3.47,0)
\rput(1.75,-.5){{$\mathcal{D}$}}
\rput(1.75,-1.5){{$(b)$}}
\end{pspicture}
$$
%\vspace*{8pt}
\caption{$(a)$ A flat $4$-banded surface of the trefoil knot and $(b)$ a flat $4$-banded surface of the figure eight knot.} \label{figure84band}
\end{figure}
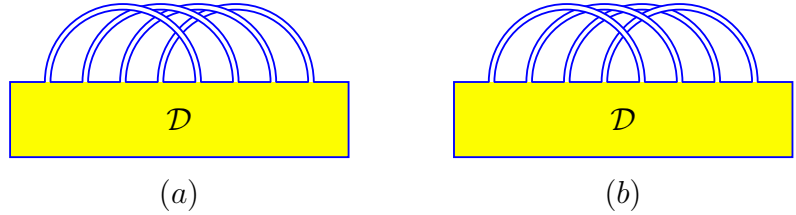

Using this definition, for a given link $L$, Furihata \emph{et al.} \cite{FHK:openbook} found an algorithm to find
a flat plumbing basket surface from a closed braid $\overline{\beta}=L$. So
we can define the \emph{flat plumbing basket number} of $L$, denoted
by $fpbk(L)$, to be the minimal number of flat annuli to obtain a
flat plumbing basket surface of $L$.

\begin{theorem} (\cite{FHK:openbook})
Let $L$ be an oriented link which is a closed $n$-braid with a braid
word $\sigma_{n-1}\sigma_{n-2}\ldots\sigma_1 W$ where the length of
$W$ is $m$ and $W$ has $p$ positive letters, then there exists a
flat plumbing basket surface $S$ with $m + 2p$ bands such that
$\partial S$ is isotopic to $L$, $i.e.,$ $fpbk(L)\le m+2p$.
\label{falbktheorem1}
\end{theorem}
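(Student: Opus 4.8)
The plan is to build the flat plumbing basket surface directly from the closed braid, letting the prefix $\sigma_{n-1}\sigma_{n-2}\cdots\sigma_1$ manufacture the base disc and letting the remaining word $W$ dictate the bands. First I would observe that the closure of the prefix alone, $\overline{\sigma_{n-1}\sigma_{n-2}\cdots\sigma_1}$, is the unknot (its underlying permutation is a single $n$-cycle, and this descending braid is the standard $n$-strand presentation of the unknot), and that its canonical Seifert surface---the $n$ Seifert discs joined by the $n-1$ bands coming from the letters $\sigma_{n-1},\ldots,\sigma_1$---has Euler characteristic $n-(n-1)=1$ and hence is an embedded disc. I would isotope this disc so that it sits as a single page of the trivial open book $\mathcal{O}$, with its top edge along the binding $z$-axis, exactly as the shaded region $\mathcal{D}$ in Fig.~\ref{figure84band}. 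In this way the $n-1$ prefix bands are absorbed into the base disc and contribute nothing to the final band count; this is the structural reason that the prefix does not appear in the estimate $m+2p$.

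Next I would reintroduce the $m$ letters of $W$ one at a time, each as a band attached to the top edge of $\mathcal{D}$ and pushed into its own page, so that distinct bands occupy distinct pages of $\mathcal{O}$. Since there is a continuum of pages, after fixing the cyclic order of the attaching arcs along the top edge one can separate the bands by their $\theta$-coordinate without creating intersections. The boundary of the resulting surface is isotopic to $\overline{\sigma_{n-1}\cdots\sigma_1 W}=L$ because each inserted band alters the boundary precisely by the corresponding crossing of $W$. The accounting is where the two signs diverge: a band coming from a \emph{negative} letter may be taken to be a single flat annulus $A_0$ and so contributes one band, whereas a band coming from a \emph{positive} letter carries the opposite half-twist and must be corrected before it can be realized flatly inside a page.

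For each positive letter I would replace the offending half-twisted band by a flat band together with two auxiliary flat bands that jointly carry the required full twist while keeping the surface orientable and its boundary unchanged, yielding three flat bands per positive letter. Summing, the $m-p$ negative letters contribute $m-p$ bands and the $p$ positive letters contribute $3p$ bands, for a total of $(m-p)+3p=m+2p$ flat bands, all untwisted and in distinct pages; hence $\partial S\simeq L$ is realized by a flat plumbing basket surface with $m+2p$ bands and $fpbk(L)\le m+2p$.

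The main obstacle is exactly this sign asymmetry. A flat band $A_0$ attached to the oriented disc $\mathcal{D}$ induces, on the coherently oriented boundary inherited from the braid closure, a crossing of a single fixed sign; producing the opposite sign with a bare untwisted band would violate either orientability or the coherent orientation, which is why positive letters cannot be absorbed for free. The delicate point to verify is therefore that a positive crossing can be produced from exactly two extra flat bands, embedded in their own pages and compatible with the bands already placed, so that the net effect on $\partial S$ is that single positive crossing and nothing more. I would also have to confirm the compatibility of the base-disc isotopy with the page structure, namely that the half-twists carried by the prefix bands can genuinely be flattened into $\mathcal{D}$ without disturbing the bands arising from $W$.
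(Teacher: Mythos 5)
First, note that the paper does not actually prove Theorem~\ref{falbktheorem1}: it is quoted from Furihata--Hirasawa--Kobayashi \cite{FHK:openbook}, and the only material in the paper touching the construction is the Algorithm of Section~\ref{result} together with Example~\ref{exa1} (the knot $5_2$) and Fig.~\ref{52complete}. Measured against that, your outline is the right one and matches the illustrated construction: the descending prefix $\sigma_{n-1}\cdots\sigma_1$ closes to the unknot, its canonical Seifert surface is a disc (Euler characteristic $n-(n-1)=1$) that becomes the page $\mathcal{D}$, each letter of $W$ whose sign disagrees with the corresponding prefix letter contributes one flat band, and each letter whose sign agrees (here, each positive letter) costs two additional flat annuli, giving $(m-p)+3p=m+2p$. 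This is exactly the bookkeeping the paper carries out for $5_2$, where the three letters of $(\sigma_2)^{-3}$ cost one band each while $\sigma_1^{-1}$, having the same sign as the prefix letter $\sigma_1^{-1}$, costs three.

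The gap is that the only nontrivial content of the theorem is precisely the step you defer: that the canonical Seifert surface can be isotoped so that $\mathcal{D}$ is a single page with the $W$-bands attached along the binding in distinct pages, and that a letter of the ``wrong'' sign can be traded for exactly three flat annuli without changing the boundary. You assert the three-band replacement (``two auxiliary flat bands that jointly carry the required full twist'') but give no local model and no verification that the net effect on $\partial S$ is the single intended crossing and nothing more; you flag this yourself as the delicate point. Without that lemma the count $m+2p$ is unsupported. A secondary unaddressed point: the prefix bands are half-twisted bands joining \emph{nested} Seifert discs, so ``absorbing'' them into a flat page is itself a nontrivial isotopy that must be performed compatibly with the later band attachments; Fig.~\ref{52complete}$(a)$--$(c)$ is exactly the sequence of pictures you would need to supply. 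So the skeleton is correct and agrees with the cited source's method, but as written the argument is a plan rather than a proof.
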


This upper bound has been improved by the third author~\cite{Kim:flat}
where the link is prime but not splittable.

\begin{theorem} (\cite{Kim:flat})
Let $L$ be an oriented link which is a closed $n$-braid with a braid
word $\beta$ whose length is $m$ and
let $ps(\sigma_i^{\pm 1})$ be the power sum
of $\sigma_i^{\pm 1}$ in $\beta$ for all $i=1, 2,
\ldots, n-1$.
Let $\gamma$ be the cardinality of the set
$$\Omega= \{ i | 1
\le i \le n-1, \sigma_i ~{\rm{and}}~ \sigma_i^{-1}~ {\rm{both}~\rm{appear}~ \rm{in}}~ \beta\}.$$
Let
$$\epsilon_i = \begin{cases} 1 ~~~& {\rm{if}}~ 1 \le
ps(\sigma_i^{1}) \le ps(\sigma_i^{- 1}) ~{\rm{or}}~ps(\sigma_i^{- 1})=0, \\
-1 ~~~&{\rm{if}}~ 1 \le ps(\sigma_i^{-1}) \le ps(\sigma_i^{1})
~{\rm{or}}~ps(\sigma_i^{1})=0. \end{cases}
$$
Then the flat plumbing basket number of $L$ is bounded by
$m + n-1 -4\gamma +2\sum_{i=1}^{n-1} ps(\sigma_i^{\epsilon_i})$, $i. e.,$
$$fpbk(L)\le m + n-1 -4\gamma +2\sum_{i=1}^{n-1} ps(\sigma_i^{\epsilon_i}).$$
\label{fpbktheoremext}
\end{theorem}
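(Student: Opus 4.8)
The plan is to derive the estimate from the Furihata--Hirasawa--Kobayashi bound of Theorem~\ref{falbktheorem1} by first putting $\beta$ into the canonical form that theorem requires, and then improving the resulting band count column by column. Write $a_i=ps(\sigma_i^{1})$ and $b_i=ps(\sigma_i^{-1})$, so that $m=\sum_i(a_i+b_i)$. Since the closure is unchanged if we insert a trivial factor, $L=\overline{\beta}=\overline{(\sigma_{n-1}\cdots\sigma_1)\,W}$ with $W=(\sigma_{n-1}\cdots\sigma_1)^{-1}\beta$, a word of length $m+(n-1)$ whose positive letters are precisely the $\sum_i a_i$ positive letters of $\beta$. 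Theorem~\ref{falbktheorem1} then gives the \emph{naive bound} $fpbk(L)\le (n-1)+m+2\sum_i a_i$, in which each negative band of $\beta$ costs $1$, each positive band costs $3$, and the descending prefix is free. Comparing with the asserted estimate, what remains is to replace the term $2\sum_i a_i$ by $2\sum_i ps(\sigma_i^{\epsilon_i})$ and to subtract $4\gamma$; all of the work is concentrated in the generators of $\Omega$, i.e.\ those occurring with \emph{both} signs.

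I would organize this as a per-column analysis (equivalently, an induction on $\gamma=|\Omega|$). For a generator of a single sign one has $ps(\sigma_i^{\epsilon_i})=a_i+b_i$, and the naive bound already lies below the target contribution for that column (negative-only columns cost even less than the formula charges them), so these indices are harmless. For a mixed generator the goal is to realize a column cost of $3\min(a_i,b_i)+\max(a_i,b_i)-4$; here the role of $\epsilon_i$ is exactly to record which sign is the \emph{minority} sign, so that the minority bands are the only expensive ones and contribute $2\,ps(\sigma_i^{\epsilon_i})=2\min(a_i,b_i)$ to the expensive count. Summing $3\min(a_i,b_i)+\max(a_i,b_i)-4$ over $\Omega$ and the single-sign contributions over the remaining indices reproduces $m+(n-1)-4\gamma+2\sum_i ps(\sigma_i^{\epsilon_i})$ exactly.

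The heart of the matter is realizing the mixed-column cost, which rests on two moves. First, a single $\sigma_i$ together with a single $\sigma_i^{-1}$ corresponds in the algorithm's surface to a flat positive band plumbed alongside a flat negative band on the same pair of feet; when these are adjacent in the plumbing order they form a canceling (de)plumbing pair that can be removed by an ambient isotopy fixing $\partial=L$, deleting one expensive and one cheap band and so lowering the count by $3+1=4$ once per mixed column. Second, the remaining $|a_i-b_i|$ bands of the majority sign must be made cheap: when the letters of column $i$ can be commuted past the rest of $\beta$ this is visible at the braid level, where they gather to $\sigma_i^{a_i-b_i}$, and one reads off the reduced cost directly; in general the conversion has to be performed on the surface using the opposite-sign bands already present in that column.

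The hard part will be making these moves legitimate when column $i$ is \emph{locked}, i.e.\ its $\sigma_i$ and $\sigma_i^{-1}$ are separated in $\beta$ by letters of other columns that cannot be removed by braid relations or conjugation. Then one cannot shorten the braid word, and the cancellation must be carried out inside the trivial open book: one slides the two bands past the intervening bands---using that plumbings along disjoint arcs commute---until they become a canceling pair, verifying at each stage that the bands stay flat and embedded in distinct pages, so that the outcome is still a genuine flat plumbing basket surface in the sense of Definition~\ref{defi1}. Establishing this commutation, together with the bookkeeping that the per-column gains sum to exactly $-4\gamma+2\sum_i ps(\sigma_i^{\epsilon_i})$ (and that single-sign and negative-only columns only improve the bound), is the delicate part; the remainder is a routine substitution into Theorem~\ref{falbktheorem1}.
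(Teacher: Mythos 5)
First, a point of comparison: this paper contains no proof of Theorem~\ref{fpbktheoremext} at all --- it is imported verbatim from \cite{Kim:flat} --- and the only material here reflecting its mechanism is Example~\ref{exa1} and Fig.~\ref{52complete}, where the spanning disc is assembled from one letter $\sigma_i^{\delta_i}$ per column with freely chosen signs $\delta_i\in\{\pm1\}$, after which a remaining letter of sign $-\delta_i$ costs one flat annulus and a letter of sign $\delta_i$ costs three. Your outline (reduce to Theorem~\ref{falbktheorem1} by inserting $(\sigma_{n-1}\cdots\sigma_1)(\sigma_{n-1}\cdots\sigma_1)^{-1}$, then improve column by column with $\epsilon_i$ selecting the minority sign) is consistent with that, and your dispatch of the single-sign columns is fine. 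But your mixed-column bookkeeping is off by one: with $a_i=ps(\sigma_i^{1})$ and $b_i=ps(\sigma_i^{-1})$, the budget the stated bound allots to a column $i\in\Omega$ is $(a_i+b_i)+1-4+2\min(a_i,b_i)=3\min(a_i,b_i)+\max(a_i,b_i)-3$, not $-4$. The move you describe removes $3+1=4$ annuli from the naive column cost $1+3\min(a_i,b_i)+\max(a_i,b_i)$ (you dropped the $+1$ contributed by the prepended $\sigma_i^{-\delta_i}$), landing exactly at $3\min(a_i,b_i)+\max(a_i,b_i)-3$. So the construction you describe is precisely strong enough, but the identity you assert --- that summing $3\min+\max-4$ over $\Omega$ reproduces the formula exactly --- is false, and the extra unit of saving is not produced by anything in your argument.

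The more serious problem is that the entire content of the theorem sits in the step you explicitly defer. Showing that the positive/negative pair in a ``locked'' column can be cancelled inside the trivial open book is not a routine commutation: (i) the arcs along which the FHK bands are plumbed are not pairwise disjoint in general (bands of columns $i$ and $i\pm1$ have feet on the same Seifert disc), so ``plumbings along disjoint arcs commute'' does not apply as stated and the order of pages genuinely matters; (ii) a flat band of the cheap sign and the three flat bands realizing an expensive letter do not form a (de)plumbing pair in any standard sense, so the removal of these four annuli while keeping the boundary equal to $L$ and keeping every remaining band flat and in its own page is exactly the assertion to be proved, not a lemma you may quote; and (iii) you perform one such cancellation per column of $\Omega$ and add up the savings, but nothing in the outline rules out the slides required for different columns obstructing one another. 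As written, your argument is complete only for braid words in which each mixed pair can be made adjacent by braid relations and conjugation; for the general case it is a plausible plan with the decisive step missing.
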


The third author proved that every link $L$ admits a flat plumbing basket representation
from a canonical Seifert surface $\F_L$ of $L$ to have a property that the Seifert graph
$\Gamma(D_L)$ has a co-tree edge alternating spanning tree $T$~\cite{Kim:flat}.

\begin{theorem} (\cite{Kim:flat})
Let $\Gamma$ be an Seifert graph of canonical Seifert surface $S$ of a link $L$ with
$|V(\Gamma)|=n$, $|E(\Gamma)|=m$ and the sign labeling $\phi$.
Let $G(\Gamma)$ be the Seifert graph of $\Gamma$.
Let $T$ be a co-tree  edge alternating spanning tree of $\Gamma$ and $\mu$ a
labeling on $T$ chosen in~\cite[Theorem 3.3]{Kim:flat}.
Let $\delta(T)$ be the cardinality of the set
$$\Psi(T) =\{e\in E(T)|~\mu(e)\neq\phi(\overline{e}) ~{\rm{for~all}}~ \overline{e} \in \Gamma(e) \},$$
and let $\zeta(T)$ be the cardinality of the set
$$\Upsilon(T) =\{\overline{e}\in E(\Gamma(T))~|~\mu(e)=\phi(\overline{e}),~\overline{e}\in \Gamma(e),~e \in E(T)-\Psi(T) \}.$$
and let $\eta(T)$ be the cardinality of the set
$$\Phi(T) = \{ \overline{e}\in E(\Gamma)-E(\Gamma(T)) ~|~\mu(\overline{e}) = \nu(e) \}$$
where $\nu(e) =+$($-$, resp.) if there is one extra positive(negative, respectively) sign
in the path $P_e$ joining end vertices of the edge $e$ in $T$.
Then the
flat plumbing basket number of $L$ is bounded by $m - 3(n-1) + 2 ( 2\delta(T)+\zeta(T) + \eta(T))$, $i. e.$,
$$ fpbk(L)\le m - 3(n-1) + 2 ( 2\delta(T)+\zeta(T) + \eta(T)).$$ \label{flatbktheorem3}
\end{theorem}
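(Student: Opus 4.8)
The plan is to prove the inequality by an explicit construction: since the assertion is only an upper bound on $fpbk(L)$, it suffices to build one flat plumbing basket surface $\F$ with $\partial\F = L$ and to count its flat bands. The inputs are the canonical Seifert surface $S$ with Seifert graph $\Gamma$, the co-tree edge alternating spanning tree $T$ whose existence is the result quoted just before the statement, and the optimal labeling $\mu$ furnished by \cite[Theorem 3.3]{Kim:flat}; these fix all of $\delta(T), \zeta(T), \eta(T)$, so the task reduces to turning this combinatorial data into an embedded surface in the trivial open book $\mathcal{O}$ and tallying bands.

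First I would build the base disc. Viewing each of the $n$ Seifert circles as a $2$-disc in a page of $\mathcal{O}$, I walk along $T$ and, for each of its $n-1$ edges $e$, use a preferred representative of the bundle $\Gamma(e)$ (selected according to $\mu(e)$) to splice the two incident discs together. The geometric fact that makes the tree useful is that a \emph{single} band joining two discs, although it issues from a half-twisted crossing, is isotopic to a flat band and simply enlarges the disc; what genuinely survives and must be paid for is the \emph{relative} twisting among the parallel crossings of a bundle and the twisting carried around honest cycles. After this amalgamation the remaining crossings are realized as flat bands, each in its own page of $\mathcal{O}$.

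The counting then has a baseline and two corrections. One band per crossing gives the baseline $m$; comparing the amalgamated surface with this baseline shows that using $T$ to merge the discs and to absorb one representative per bundle removes exactly $3$ bands per tree edge, contributing $-3(n-1)$. Against this saving, every crossing whose sign opposes the orientation fixed by $\mu$ must have a half-twist undone, which a standard local flattening move repairs at the cost of two extra flat bands; the three sets of the statement enumerate precisely the crossings incurring such a repair. Namely, $\Psi(T)$ collects the bundles whose label $\mu(e)$ disagrees with every parallel sign $\phi(\overline{e})$ and which therefore force the heavier repair, explaining the weight $2$ inside $2\delta(T)$; $\Upsilon(T)$ collects the parallel crossings of the surviving bundles in $E(T)-\Psi(T)$ that match $\mu(e)$; and $\Phi(T)$ collects the honest co-tree crossings for which $\mu(\overline{e})$ equals the extra sign $\nu(e)$ read off the tree path $P_e$. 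Assembling the baseline $m$, the tree saving $-3(n-1)$, and the repairs $2(2\delta(T)+\zeta(T)+\eta(T))$ yields, after simplification, the claimed bound.

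The step I expect to be the real obstacle is certifying that this recipe outputs an \emph{honest} flat plumbing basket surface in the sense of Definition~\ref{defi1}: that every band ends up genuinely flat, that \emph{distinct} bands occupy \emph{distinct} pages of $\mathcal{O}$, and that the flattening moves applied to opposing crossings neither crowd two bands into a common page nor reintroduce twisting elsewhere. This is exactly where the co-tree edge alternating hypothesis earns its keep: the alternation of signs around each fundamental cycle is what allows the twist created by realizing a co-tree band to be cancelled flatly against the tree-path signs $\nu(e)$, so that each local move can be performed in a free page. Once the embedding is certified and the optimality of $\mu$ from \cite[Theorem 3.3]{Kim:flat} is invoked to make the three correction sets as small as the construction allows, matching the final band count to $m - 3(n-1) + 2(2\delta(T)+\zeta(T)+\eta(T))$ is a finite bookkeeping verification that completes the proof.
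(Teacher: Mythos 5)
You should first be aware that this paper contains no proof of Theorem~\ref{flatbktheorem3}: it is quoted from \cite{Kim:flat} (hence the citation in the theorem header) and used as a black box to bound $fpbk(L)$ from above, so there is no in-paper argument to compare yours against. Your proposal therefore has to stand on its own, and as written it is a plan rather than a proof. The overall strategy---construct one flat plumbing basket surface from the canonical Seifert surface by amalgamating the Seifert discs along the spanning tree $T$ and then count bands---is the right kind of strategy for an upper bound of this form, but every quantitative ingredient of the bound is asserted rather than derived. Why does merging along a tree edge save exactly $3$ bands rather than $2$ or $4$? Why does each ``repair'' of an opposing crossing cost exactly $2$ flat annuli? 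Why do the bundles in $\Psi(T)$ carry weight $2$ while the crossings in $\Upsilon(T)$ and $\Phi(T)$ carry weight $1$? These constants \emph{are} the theorem; declaring their verification to be ``a finite bookkeeping verification'' at the end begs the question.

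The second, and more serious, gap is one you flag yourself: certifying that the recipe outputs an honest flat plumbing basket surface in the sense of Definition~\ref{defi1}, with every band flat and distinct bands in distinct pages of the trivial open book, and with no residual twisting introduced by the flattening moves. Naming this as ``the real obstacle'' and then saying only that the co-tree edge alternating hypothesis ``earns its keep'' there is not an argument; this is precisely where an explicit isotopy (or an induction on plumbings matching Definition~\ref{defi1}) is required, and it is the step that distinguishes a valid construction from a heuristic. A smaller point: you invoke the ``optimality'' of the labeling $\mu$ from \cite[Theorem 3.3]{Kim:flat} to conclude, but the statement only requires that $\mu$ be the labeling chosen there---the inequality must hold for the surface you build with that $\mu$, and no minimization over labelings enters the claimed bound. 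To turn this into a proof you would need to consult or reconstruct the actual construction in \cite{Kim:flat} and carry out the band count crossing by crossing.
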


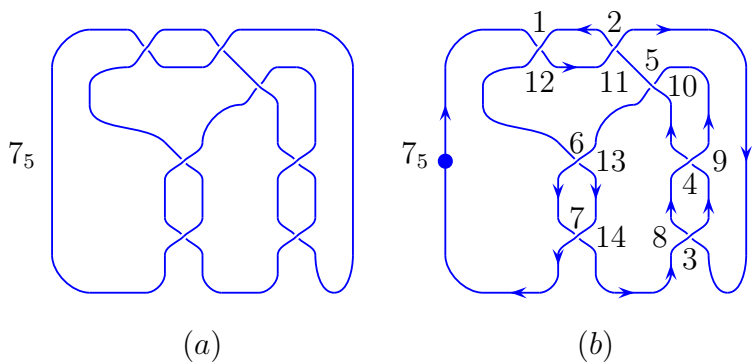
\begin{figure}
$$
\begin{pspicture}[shift=-2](-.2,-.7)(4.2,3.7)
\psline(0,.5)(0,3)
\pccurve[angleA=90,angleB=180](0,3)(.5,3.5)
\psline(.5,3.5)(1,3.5)
\pccurve[angleA=0,angleB=135](1,3.5)(1.22,3.28)
\pccurve[angleA=-45,angleB=180](1.28,3.22)(1.5,3)
\psline(1.5,3)(2,3)
\pccurve[angleA=0,angleB=-135](2,3)(2.25,3.25)
\pccurve[angleA=45,angleB=180](2.25,3.25)(2.5,3.5)
\psline(2.5,3.5)(3.5,3.5)
\pccurve[angleA=0,angleB=90](3.5,3.5)(4,3)
\psline(4,3)(4,.5)
\pccurve[angleA=-90,angleB=0](4,.5)(3.75,0)
\pccurve[angleA=-90,angleB=180](3.5,.5)(3.75,0)
\pccurve[angleA=90,angleB=-45](3.5,.5)(3.28,.72)
\pccurve[angleA=135,angleB=-90](3.22,.78)(3,1)
\psline(3,1)(3,1.5)
\pccurve[angleA=90,angleB=-135](3,1.5)(3.25,1.75)
\pccurve[angleA=45,angleB=-90](3.25,1.75)(3.5,2)
\psline(3.5,2)(3.5,2.75)
\pccurve[angleA=90,angleB=0](3.5,2.75)(3.25,3)
\psline(3.25,3)(3,3)
\pccurve[angleA=180,angleB=45](3,3)(2.78,2.78)
\pccurve[angleA=-135,angleB=0](2.72,2.72)(2.5,2.5)
\pccurve[angleA=180,angleB=90](2.5,2.5)(2,2)
\pccurve[angleA=-90,angleB=45](2,2)(1.75,1.75)
\pccurve[angleA=-135,angleB=90](1.75,1.75)(1.5,1.5)
\psline(1.5,1.5)(1.5,1)
\pccurve[angleA=-90,angleB=135](1.5,1)(1.72,.78)
\pccurve[angleA=-45,angleB=90](1.78,.72)(2,.5)
\psline(2,.5)(2,.25)
\pccurve[angleA=-90,angleB=180](2,.25)(2.25,0)
\psline(2.25,0)(2.75,0)
\pccurve[angleA=0,angleB=-90](2.75,0)(3,.25)
\psline(3,.25)(3,.5)
\pccurve[angleA=90,angleB=-135](3,.5)(3.25,.75)
\pccurve[angleA=45,angleB=-90](3.25,.75)(3.5,1)
\psline(3.5,1)(3.5,1.5)
\pccurve[angleA=90,angleB=-45](3.5,1.5)(3.28,1.71)
\pccurve[angleA=135,angleB=-90](3.22,1.78)(3,2)
\psline(3,2)(3,2.5)
\pccurve[angleA=90,angleB=-45](3,2.5)(2.75,2.75)
\pccurve[angleA=135,angleB=-45](2.75,2.75)(2.5,3)
\pccurve[angleA=135,angleB=-45](2.5,3)(2.28,3.22)
\pccurve[angleA=135,angleB=0](2.22,3.28)(2,3.5)
\psline(2,3.5)(1.5,3.5)
\pccurve[angleA=180,angleB=45](1.5,3.5)(1.25,3.25)
\pccurve[angleA=-135,angleB=0](1.25,3.25)(1,3)
\pccurve[angleA=180,angleB=90](1,3)(.5,2.75)
\psline(.5,2.75)(.5,2.5)
\pccurve[angleA=-90,angleB=135](.5,2.5)(1.5,2)
\pccurve[angleA=-45,angleB=135](1.5,2)(1.72,1.78)
\pccurve[angleA=-45,angleB=90](1.78,1.72)(2,1.5)
\psline(2,1.5)(2,1)
\pccurve[angleA=-90,angleB=45](2,1)(1.75,.75)
\pccurve[angleA=-135,angleB=90](1.75,.75)(1.5,.5)
\psline(1.5,.5)(1.5,.25)
\pccurve[angleA=-90,angleB=0](1.5,.25)(1.25,0)
\psline(1.25,0)(.5,0)
\pccurve[angleA=180,angleB=-90](.5,0)(0,.5)
\rput[t](2,-.5){$(a)$} \rput[t](-.4,2){$7_5$}
\end{pspicture} \quad\quad
\begin{pspicture}[shift=-2](-.2,-.7)(4.2,3.7)
\psline(0,.5)(0,3) \pscircle[fillcolor=darkred, fillstyle=solid, linewidth=3pt](0,1.75){.1}
\pccurve[angleA=90,angleB=180](0,3)(.5,3.5)
\psline[arrowscale=1.5]{->}(0,2.4)(0,2.5)
\psline(.5,3.5)(1,3.5)
\pccurve[angleA=0,angleB=135](1,3.5)(1.22,3.28)
\pccurve[angleA=-45,angleB=180](1.28,3.22)(1.5,3)
\psline(1.5,3)(2,3) \psline[arrowscale=1.5]{->}(1.74,3)(1.76,3)
\pccurve[angleA=0,angleB=-135](2,3)(2.25,3.25)
\pccurve[angleA=45,angleB=180](2.25,3.25)(2.5,3.5)
\psline(2.5,3.5)(3.5,3.5) \psline[arrowscale=1.5]{->}(2.99,3.5)(3.01,3.5)
\pccurve[angleA=0,angleB=90](3.5,3.5)(4,3)
\psline(4,3)(4,.5) \psline[arrowscale=1.5]{->}(4,1.76)(4,1.74)
\pccurve[angleA=-90,angleB=0](4,.5)(3.75,0)
\pccurve[angleA=-90,angleB=180](3.5,.5)(3.75,0)
\pccurve[angleA=90,angleB=-45](3.5,.5)(3.28,.72)
\pccurve[angleA=135,angleB=-90](3.22,.78)(3,1)
\psline(3,1)(3,1.5) \psline[arrowscale=1.5]{->}(3,1.24)(3,1.26)
\pccurve[angleA=90,angleB=-135](3,1.5)(3.25,1.75)
\pccurve[angleA=45,angleB=-90](3.25,1.75)(3.5,2)
\psline(3.5,2)(3.5,2.75) \psline[arrowscale=1.5]{->}(3.5,2.37)(3.5,2.38)
\pccurve[angleA=90,angleB=0](3.5,2.75)(3.25,3)
\psline(3.25,3)(3,3)
\pccurve[angleA=180,angleB=45](3,3)(2.78,2.78)
\pccurve[angleA=-135,angleB=0](2.72,2.72)(2.5,2.5)
\pccurve[angleA=180,angleB=90](2.5,2.5)(2,2)
\pccurve[angleA=-90,angleB=45](2,2)(1.75,1.75)
\pccurve[angleA=-135,angleB=90](1.75,1.75)(1.5,1.5)
\psline(1.5,1.5)(1.5,1) \psline[arrowscale=1.5]{->}(1.5,1.26)(1.5,1.24)
\pccurve[angleA=-90,angleB=135](1.5,1)(1.72,.78)
\pccurve[angleA=-45,angleB=90](1.78,.72)(2,.5)
\psline(2,.5)(2,.25)
\pccurve[angleA=-90,angleB=180](2,.25)(2.25,0)
\psline(2.25,0)(2.75,0) \psline[arrowscale=1.5]{->}(2.49,0)(2.51,0)
\pccurve[angleA=0,angleB=-90](2.75,0)(3,.25)
\psline(3,.25)(3,.5) \psline[arrowscale=1.5]{->}(3,.37)(3,.38)
\pccurve[angleA=90,angleB=-135](3,.5)(3.25,.75)
\pccurve[angleA=45,angleB=-90](3.25,.75)(3.5,1)
\psline(3.5,1)(3.5,1.5) \psline[arrowscale=1.5]{->}(3.5,1.24)(3.5,1.26)
\pccurve[angleA=90,angleB=-45](3.5,1.5)(3.28,1.71)
\pccurve[angleA=135,angleB=-90](3.22,1.78)(3,2)
\psline(3,2)(3,2.5) \psline[arrowscale=1.5]{->}(3,2.24)(3,2.26)
\pccurve[angleA=90,angleB=-45](3,2.5)(2.75,2.75)
\pccurve[angleA=135,angleB=-45](2.75,2.75)(2.5,3)
\pccurve[angleA=135,angleB=-45](2.5,3)(2.28,3.22)
\pccurve[angleA=135,angleB=0](2.22,3.28)(2,3.5)
\psline(2,3.5)(1.5,3.5) \psline[arrowscale=1.5]{->}(1.76,3.5)(1.74,3.5)
\pccurve[angleA=180,angleB=45](1.5,3.5)(1.25,3.25)
\pccurve[angleA=-135,angleB=0](1.25,3.25)(1,3)
\pccurve[angleA=180,angleB=90](1,3)(.5,2.75)
\psline(.5,2.75)(.5,2.5)
\pccurve[angleA=-90,angleB=135](.5,2.5)(1.5,2)
\pccurve[angleA=-45,angleB=135](1.5,2)(1.72,1.78)
\pccurve[angleA=-45,angleB=90](1.78,1.72)(2,1.5)
\psline(2,1.5)(2,1) \psline[arrowscale=1.5]{->}(2,1.26)(2,1.24)
\pccurve[angleA=-90,angleB=45](2,1)(1.75,.75)
\pccurve[angleA=-135,angleB=90](1.75,.75)(1.5,.5)
\psline(1.5,.5)(1.5,.25) \psline[arrowscale=1.5]{->}(1.5,.38)(1.5,.37)
\pccurve[angleA=-90,angleB=0](1.5,.25)(1.25,0)
\psline(1.25,0)(.5,0) \psline[arrowscale=1.5]{->}(.88,0)(.87,0)
\pccurve[angleA=180,angleB=-90](.5,0)(0,.5)
\rput[t](-.4,2){$7_5$}
\rput(1.25,3.6){$1$} \rput(2.25,3.6){$2$} \rput(3.25,.45){$3$} \rput(3.25,1.45){$4$}
\rput(2.75,3.1){$5$} \rput(1.75,2){$6$} \rput(1.75,1){$7$}
 \rput(2.85,.75){$8$} \rput(3.65,1.75){$9$}
\rput(3.15,2.75){$10$}  \rput(2.25,2.8){$11$} \rput(1.25,2.8){$12$} \rput(2.2,1.75){$13$} \rput(2.2,.75){$14$} \rput[t](2,-.5){$(b)$}
\end{pspicture}
$$
%\vspace*{8pt}
\caption{$(a)$ The knot $7_5$ and $(b)$ the numbering of $7_5$ to find its Dowker-Thistlethwaite
(DT)-code.}
\label{DT75}
\end{figure}

Next, we explain the \emph{DT Code} (DT after C. Dowker and M. Thistlethwaite~\cite{DT})
of a knot $K$ because we will use it to identify prime knots whose flat plumbing basket number
is $6$. It can be obtained as follows:
\begin{enumerate}
\item[{\rm (1)}] Start walking along $K$ in a fixed direction
 as indicated in Fig.~\ref{DT75} $(a)$ and count every crossing you pass through.
 If $K$ has $n$ crossings and given that every crossing is visited twice, the count ends at $2n$. Label each crossing with the values of the counter when it is visited, though when labeling by an even number, take it with a minus sign if you are walking "over" the crossing.
\item[{\rm (2)}]
 Every crossing is now labeled with two integers whose absolute values run from $1$ to $2n$. It is easy to see that each crossing is labeled with one odd integer and one even integer. The DT code of $K$ is the list of even integers paired with the odd integers $1, 3, 5, \ldots$, taken in this order with the following modification: if the label is an even number and the strand followed crosses over at the crossing, then change the sign on the label to be a negative.
\item[{\rm (3)}]
 Thus for example the pairing for the knot in Fig.~\ref{DT75} $(b)$
 is given
 $$\left( \begin{matrix}
 1 & 3 & 5 & 7 & 9 & 11 & 13 \\
 12 & 8 & 10 & 14 & 4 & 2 & 6
 \end{matrix} \right)$$
 At last, the DT code for this labelling is the sequence $12~8~10~14~4~2~6$.
\end{enumerate}

From the given flat plumbing basket surface in Fig.~\ref{figure84band} $(a)$,
we obtain its DT code -$32$ -$14$ -$44$ -$22$ -$40$ $2$ $28$
$10$ -$48$ $18$ $36$ $6$ -$24$ -$42$ -$12$ $30$ -$16$ -$46$
-$20$ $34$ $4$ $26$ $8$ $38$ by choosing the starting point at the
upper-left corner of the rectangle and the clockwise orientation.

\section{The flat plumbing basket codes and results} \label{result}

For a given link $L$, Furihata \emph{et al.}~\cite{FHK:openbook} found an algorithm to find
a flat plumbing basket surface from a closed braid $\overline{\beta}=L$ as follows.

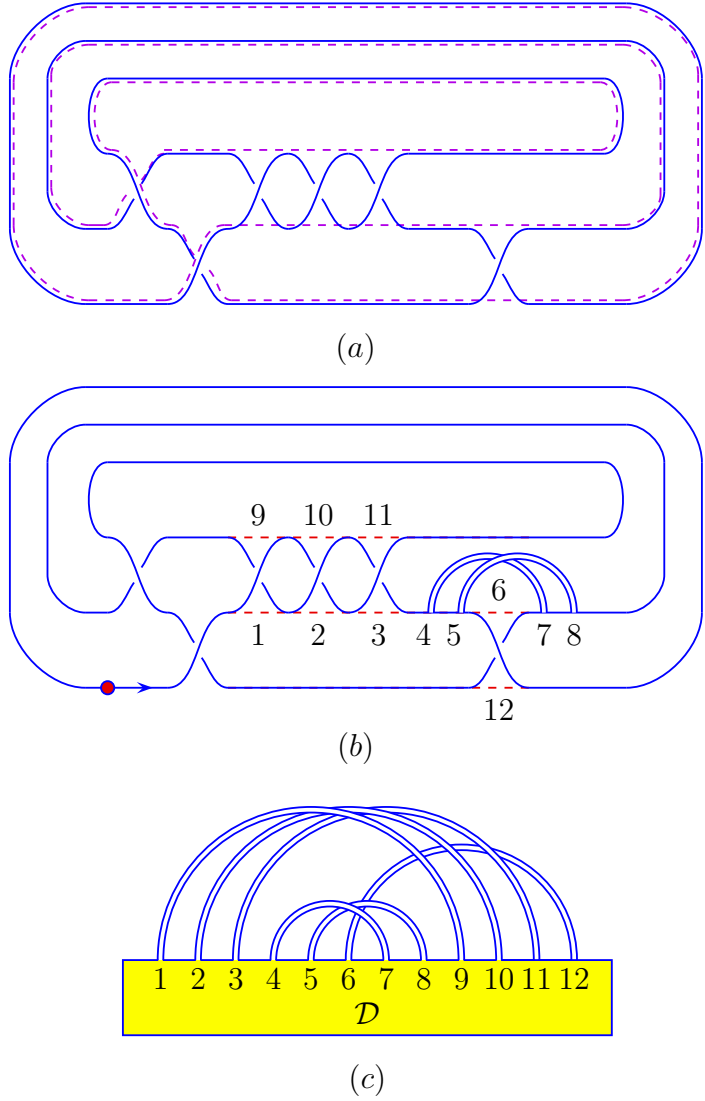
\begin{figure}
$$
\begin{pspicture}[shift=-1.2](0,-1.8)(9.5,3.1)
\psline[linecolor=pup, linestyle=dashed](.05,.05)(.05,1.95)
\pccurve[linecolor=pup, linestyle=dashed, angleA=90,angleB=180](.05,1.95)(1.05,2.95)
\psline[linecolor=pup, linestyle=dashed](1.05,2.95)(8.15,2.95)
\pccurve[linecolor=pup, linestyle=dashed, angleA=0,angleB=90](8.15,2.95)(9.15,2.05)
\psline[linecolor=pup, linestyle=dashed](9.15,2.05)(9.15,.05)
\pccurve[linecolor=pup, linestyle=dashed, angleA=-90,angleB=0](9.15,.05)(8.15,-.95)
\psline[linecolor=pup, linestyle=dashed](8.15,-.95)(2.95,-.95)
\pccurve[linecolor=pup, linestyle=dashed, angleA=180,angleB=-45](2.95,-.95)(2.65,-.55)
\pccurve[linecolor=pup, linestyle=dashed, angleA=135,angleB=0](2.45,-.35)(2.15,.05)
\pccurve[linecolor=pup, linestyle=dashed, angleA=180,angleB=0](2.15,.05)(1.35,1.05)
\pccurve[linecolor=pup, linestyle=dashed, angleA=180,angleB=180](1.35,1.05)(1.35,1.95)
\psline[linecolor=pup, linestyle=dashed](1.35,1.95)(7.85,1.95)
\pccurve[linecolor=pup, linestyle=dashed, angleA=0,angleB=0](7.85,1.95)(7.85,1.05)
\psline[linecolor=pup, linestyle=dashed](7.85,1.05)(2.05,1.05)
\pccurve[linecolor=pup, linestyle=dashed, angleA=180,angleB=45](2.05,1.000005)(1.75,.65)
\pccurve[linecolor=pup, linestyle=dashed, angleA=-135,angleB=0](1.55,.45)(1.25,.05)
\psline[linecolor=pup, linestyle=dashed](1.25,.05)(1.05,.05)
\pccurve[linecolor=pup, linestyle=dashed, angleA=180,angleB=-90](1.05,.05)(.55,.55)
\psline[linecolor=pup, linestyle=dashed](.55,.55)(.55,2.05)
\pccurve[linecolor=pup, linestyle=dashed, angleA=90,angleB=180](.55,2.05)(1.05,2.45)
\psline[linecolor=pup, linestyle=dashed](1.05,2.45)(8.15,2.45)
\pccurve[linecolor=pup, linestyle=dashed, angleA=0,angleB=90](8.15,2.45)(8.65,1.95)
\psline[linecolor=pup, linestyle=dashed](8.65,1.95)(8.65,.45)
\pccurve[linecolor=pup, linestyle=dashed, angleA=-90,angleB=0](8.65,.45)(8.15,.05)
\psline[linecolor=pup, linestyle=dashed](8.15,.05)(2.85,.05)
\pccurve[linecolor=pup, linestyle=dashed, angleA=180,angleB=0](2.85,.05)(2.05,-.95)
\psline[linecolor=pup, linestyle=dashed](2.05,-.95)(1.05,-.95)
\pccurve[linecolor=pup, linestyle=dashed, angleA=180,angleB=-90](1.05,-.95)(.05,.05)
\psline(0,0)(0,2)
\pccurve[angleA=90,angleB=180](0,2)(1,3)
\psline(1,3)(8.2,3)
\pccurve[angleA=0,angleB=90](8.2,3)(9.2,2)
\psline(9.2,2)(9.2,0)
\pccurve[angleA=-90,angleB=0](9.2,0)(8.2,-1)
\psline(8.2,-1)(6.9,-1)
\pccurve[angleA=180,angleB=-55](6.9,-1)(6.55,-.6)
\pccurve[angleA=125,angleB=0](6.45,-.4)(6.1,0)
\psline(6.1,0)(5.3,0)
\pccurve[angleA=180,angleB=-55](5.3,0)(4.95,.4)
\pccurve[angleA=125,angleB=0](4.85,.6)(4.5,1)
\pccurve[angleA=180,angleB=0](4.5,1)(3.7,0)
\pccurve[angleA=180,angleB=-55](3.7,0)(3.35,.4)
\pccurve[angleA=125,angleB=0](3.25,.6)(2.9,1)
\psline(2.9,1)(2.1,1)
\pccurve[angleA=180,angleB=55](2.1,1)(1.75,.6)
\pccurve[angleA=-125,angleB=0](1.65,.4)(1.3,0)
\psline(1.3,0)(1,0)
\pccurve[angleA=180,angleB=-90](1,0)(.5,.5)
\psline(.5,.5)(.5,2)
\pccurve[angleA=90,angleB=180](.5,2)(1,2.5)
\psline(1,2.5)(8.2,2.5)
\pccurve[angleA=0,angleB=90](8.2,2.5)(8.7,2)
\psline(8.7,2)(8.7,.5)
\pccurve[angleA=-90,angleB=0](8.7,.5)(8.2,0)
\psline(8.2,0)(6.9,0)
\pccurve[angleA=180,angleB=0](6.9,0)(6.1,-1)
\psline(6.1,-1)(2.9,-1)
\pccurve[angleA=180,angleB=-55](2.9,-1)(2.55,-.6)
\pccurve[angleA=125,angleB=0](2.45,-.4)(2.1,0)
\pccurve[angleA=180,angleB=0](2.1,0)(1.3,1)
\pccurve[angleA=180,angleB=180](1.3,1)(1.3,2)
\psline(1.3,2)(7.9,2)
\pccurve[angleA=0,angleB=0](7.9,2)(7.9,1)
\psline(7.9,1)(5.3,1)
\pccurve[angleA=180,angleB=0](5.3,1)(4.5,0)
\pccurve[angleA=180,angleB=-55](4.5,0)(4.15,.4)
\pccurve[angleA=125,angleB=0](4.05,.6)(3.7,1)
\pccurve[angleA=180,angleB=0](3.7,1)(2.9,0)
\pccurve[angleA=180,angleB=0](2.9,0)(2.1,-1)
\psline(2.1,-1)(1,-1)
\pccurve[angleA=180,angleB=-90](1,-1)(0,0)
\rput(4.6,-1.6){{$(a)$}}
\end{pspicture}$$
$$
\begin{pspicture}[shift=-1.2](0,-2)(9.5,3.1)
\psline[linecolor=darkred, linestyle=dashed](2.9,1)(6.9,1)
\psline[linecolor=darkred, linestyle=dashed](2.9,0)(6.9,0)
\psline[linecolor=darkred, linestyle=dashed](2.9,-1)(6.9,-1)
\psline(0,0)(0,2)
\pccurve[angleA=90,angleB=180](0,2)(1,3)
\psline(1,3)(8.2,3)
\pccurve[angleA=0,angleB=90](8.2,3)(9.2,2)
\psline(9.2,2)(9.2,0)
\pccurve[angleA=-90,angleB=0](9.2,0)(8.2,-1)
\psline(8.2,-1)(6.9,-1)
\pccurve[angleA=180,angleB=0](6.9,-1)(6.1,0)
\psline(6.1,0)(5.3,0)
\pccurve[angleA=180,angleB=-55](5.3,0)(4.95,.4)
\pccurve[angleA=125,angleB=0](4.85,.6)(4.5,1)
\pccurve[angleA=180,angleB=0](4.5,1)(3.7,0)
\pccurve[angleA=180,angleB=-55](3.7,0)(3.35,.4)
\pccurve[angleA=125,angleB=0](3.25,.6)(2.9,1)
\psline(2.9,1)(2.1,1)
\pccurve[angleA=180,angleB=55](2.1,1)(1.75,.6)
\pccurve[angleA=-125,angleB=0](1.65,.4)(1.3,0)
\psline(1.3,0)(1,0)
\pccurve[angleA=180,angleB=-90](1,0)(.5,.5)
\psline(.5,.5)(.5,2)
\pccurve[angleA=90,angleB=180](.5,2)(1,2.5)
\psline(1,2.5)(8.2,2.5)
\pccurve[angleA=0,angleB=90](8.2,2.5)(8.7,2)
\psline(8.7,2)(8.7,.5)
\pccurve[angleA=-90,angleB=0](8.7,.5)(8.2,0)
\psline(8.2,0)(6.9,0)
\pccurve[angleA=180,angleB=55](6.9,0)(6.55,-.4)
\pccurve[angleA=-125,angleB=0](6.45,-.6)(6.1,-1)
\psline(6.1,-1)(2.9,-1)
\pccurve[angleA=180,angleB=-55](2.9,-1)(2.55,-.6)
\pccurve[angleA=125,angleB=0](2.45,-.4)(2.1,0)
\pccurve[angleA=180,angleB=0](2.1,0)(1.3,1)
\pccurve[angleA=180,angleB=180](1.3,1)(1.3,2)
\psline(1.3,2)(7.9,2)
\pccurve[angleA=0,angleB=0](7.9,2)(7.9,1)
\psline(7.9,1)(5.3,1)
\pccurve[angleA=180,angleB=0](5.3,1)(4.5,0)
\pccurve[angleA=180,angleB=-55](4.5,0)(4.15,.4)
\pccurve[angleA=125,angleB=0](4.05,.6)(3.7,1)
\pccurve[angleA=180,angleB=0](3.7,1)(2.9,0)
\pccurve[angleA=180,angleB=0](2.9,0)(2.1,-1)
\psline(2.1,-1)(1,-1)
\pccurve[angleA=180,angleB=-90](1,-1)(0,0)
\pscircle[linecolor=blue, fillcolor=darkred, fillstyle=solid](1.3,-1){.1}
\psline[arrowscale=1.5]{->}(1.8,-1)(1.9,-1)
\psarc[doubleline=true](6.35,0){.75}{0}{180}
\psarc[doubleline=true](6.75,0){.75}{0}{180}
\rput(3.3,-.3){{$1$}}
\rput(4.1,-.3){{$2$}}
\rput(4.9,-.3){{$3$}}
\rput(5.5,-.3){{$4$}}
\rput(5.9,-.3){{$5$}}
\rput(6.5,.3){{$6$}}
\rput(7.1,-.3){{$7$}}
\rput(7.5,-.3){{$8$}}
\rput(3.3,1.3){{$9$}}
\rput(4.1,1.3){{$10$}}
\rput(4.9,1.3){{$11$}}
\rput(6.5,-1.3){{$12$}}
\rput(4.6,-1.8){{$(b)$}}
\end{pspicture}$$
$$
\begin{pspicture}[shift=-.8](-.7,-2.4)(6.2,2)
\psarc[doubleline=true](4,-.5){1.5}{-5}{185}
\psarc[doubleline=true](2.75,-.5){.75}{-5}{185}
\psarc[doubleline=true](2.25,-.5){.75}{-5}{185}
\psarc[doubleline=true](3,-.5){2}{-5}{185}
\psarc[doubleline=true](2.5,-.5){2}{-5}{185}
\psarc[doubleline=true](2,-.5){2}{-5}{185}
\psframe[linecolor=lightgray,fillstyle=solid,fillcolor=lightgray](-.5,-1.5)(6,-.5)
\psline(-.03,-.5)(-.5,-.5)(-.5,-1.5)(6,-1.5)(6,-.5)(4.53,-.5)
\psline(.03,-.5)(.47,-.5) \psline(.53,-.5)(.97,-.5) \psline(1.03,-.5)(1.47,-.5)
\psline(1.53,-.5)(1.97,-.5) \psline(2.03,-.5)(2.47,-.5) \psline(2.53,-.5)(2.97,-.5)
\psline(3.03,-.5)(3.47,-.5) \psline(3.53,-.5)(3.97,-.5) \psline(4.03,-.5)(4.47,-.5)
\psline(4.53,-.5)(4.97,-.5) \psline(5.03,-.5)(5.47,-.5)
\rput(0,-.75){{$1$}} \rput(.5,-.75){{$2$}}
\rput(1,-.75){{$3$}} \rput(1.5,-.75){{$4$}}
\rput(2,-.75){{$5$}} \rput(2.5,-.75){{$6$}}
\rput(3,-.75){{$7$}} \rput(3.5,-.75){{$8$}}
\rput(4,-.75){{$9$}} \rput(4.5,-.75){{$10$}}
\rput(5,-.75){{$11$}} \rput(5.5,-.75){{$12$}}
\rput(2.75,-1.2){{$\mathcal{D}$}}
\rput(2.75,-2.1){{$(c)$}}
\end{pspicture}
 $$
%\vspace*{8pt}
\caption{$(a)$ The knot $5_2$ as a closed braid, $(b)$ Seifert surface of $5_2$ to apply algorithm, $(c)$ a flat plumbing basket surface of $5_2$.} \label{52complete}
\end{figure}

\vskip .3cm
\noindent{\tt Algorithm}~\cite{FHK:openbook}
\begin{itemize}
\item $Step~1$. For a give link $L$, we find its braid representation $\beta$, the closed braid $\overline{\beta} =L$.
\item $Step~2$. Apply the method in~\cite{FHK:openbook} to obtain a flat plumbing basket surface $\F$
which is obtained from a disc by successively plumbing flat annuli.
\end{itemize}

This algorithm can be demonstrated in the following Example~\ref{exa1}.

\begin{example} \label{exa1}
A flat plumbing basket code of the knot $5_2$ is $(1$, $2$, $3$, $4$, $5$, $6$, $4$, $5$, $1$, $2$, $3$, $6)$.
\end{example}
\begin{proof}
For the knot $5_2$, we first present it as a closed braid $\overline{\sigma_2\sigma_1^{-1}(\sigma_2)^{-3}\sigma_1^{-1}}$
on three strings as illustrated in Fig.~\ref{52complete} $(a)$. Although theorem in~\cite{FHK:openbook} stated differently, one can choose
any two generators of the Artin's braid group $B_3$ as stated in Theorem~\ref{fpbktheoremext}.
We choose the first $\sigma_2\sigma_1^{-1}$ to have a disc $\mathcal{D}$
which is the union of three discs, bounded by three Seifert circles,
joined by two half twisted bands presented by $\sigma_2\sigma_1^{-1}$
as indicated by the dashed purple line in Fig.~\ref{52complete} $(a)$. Since the rest word
$(\sigma_2)^{-3}\sigma_1^{-1}$ has the length $4$ and $(\sigma_2)^{-3}$ has the different sign to $\sigma_2$. we need three flat plumbings.
However $\sigma_1^{-1}$ has the same sign to $\sigma_1^{-1}$, we first change the sign of half twisted band by adding two flat annuli as
shown in Fig.~\ref{52complete} (b). Now we pick as starting point as indicated as a red dot in Fig.~\ref{52complete} $(b)$. Then, we read
the flat bands along the disc $\mathcal{D}$ in the direction as given in Fig.~\ref{52complete} (b).
By isotoping the original disc $\mathcal{D}$ to a standard disc
as depicted as the rectangular gray region in Fig.~\ref{52complete} (c), we obtain a
flat plumbing basket surface $\F$ in Fig.~\ref{52complete} (c).
By rewriting labels in the set $\{7, 8, \ldots, 12\}$ by one in the set $\{1,2, \ldots, 6\}$
depend on how $1,2, \ldots, 6$ are connected to $7, 8, \ldots, 12$,
and by the rule that for annulus presented by $i$ is in front of the annulus presented by $j$ whenever $i>j$,
we obtain the first $12$-tuple $(1$, $2$, $3$, $4$, $5$, $6$, $4$, $5$, $1$, $2$, $3$, $6)$
which is the flat plumbing basket code of the flat plumbing basket surface $\F$.
\end{proof}

Let us remark that since two groups of annuli presented by $\{1,2,3\}$ and $\{4,5\}$ do not involve each other,
one may obtain a different flat plumbing basket code of the knot $5_2$, $(1$, $2$, $6$, $1$, $2$, $3$, $4$, $5$, $6$, $3$, $4$, $5)$
by changing (4; 5) into (1; 2) and (1; 2; 3) into (3; 4; 5).

One may see that the direction and the initial page of the open book decomposition was not fixed.
By moving the initial page from $1$ to $2$, the effect on the flat plumbing basket code is an action by
the permutation $\sigma=\left(\begin{matrix} 1 & 2 & \ldots & n-1 &n \\
2 & 3 & \ldots & n & 1 \end{matrix} \right)$.
By reversing the direction of pages in the open book decomposition,
the effect on the flat plumbing basket code is an action by
the permutation $\tau=\left(\begin{matrix} 1 & 2 & \ldots & n-1 &n \\
n & n-1 & \ldots & 2 & 1 \end{matrix} \right)$.
By summarizing these observations, we find the following theorem.

 \begin{theorem} \label{equivalent} For a positive integer $n$,
\begin{enumerate}
\item[{\rm (1)}]  The number of the $2n$-tuple flat plumbing basket codes presenting the same link are divisible by $n$.

\item[{\rm (2)}]  The number of the $2n$-tuple flat plumbing basket codes presenting the same link are divisible by $2$.
\end{enumerate}
\end{theorem}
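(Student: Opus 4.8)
The plan is to deduce the two divisibility statements from two \emph{free} group actions on the set $X$ of all $2n$-tuple flat plumbing basket codes of the fixed link $L$. The preceding paragraph already supplies the actors: the cyclic permutation $\sigma$ of order $n$ produced by moving the initial page, and the reversal $\tau$ of order $2$ produced by reversing the direction of the pages, each of which sends a code of $L$ to another code of $L$. Thus a cyclic group $\langle\sigma\rangle$ of order $n$ and a cyclic group $\langle\tau\rangle$ of order $2$ act on $X$, and the strategy is to prove that each action is free. Once that is done, every $\langle\sigma\rangle$-orbit has exactly $n$ elements and every $\langle\tau\rangle$-orbit has exactly $2$ elements, so partitioning the finite set $X$ into orbits shows that $|X|$ is a multiple of $n$ in case (1) and a multiple of $2$ in case (2).

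For part (1) I would argue as follows. In any flat plumbing basket code each of the symbols $1,2,\ldots,n$ occurs exactly twice, so the set of entries of any $c\in X$ is all of $\{1,\ldots,n\}$. Suppose $\sigma^{k}$ fixes $c$ for some $0<k<n$. Since $\sigma$ acts by applying the permutation $\sigma$ to every entry, this forces $\sigma^{k}(a)=a$ for each entry $a$, hence for every $a\in\{1,\ldots,n\}$. But $\sigma$ is an $n$-cycle, so $\sigma^{k}$ fixes $\{1,\ldots,n\}$ pointwise only when $n\mid k$, which is impossible for $0<k<n$. Therefore the stabilizer of each $c$ in $\langle\sigma\rangle$ is trivial, the action is free, and $n\mid|X|$.

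Part (2) runs along the same lines with $\tau\colon i\mapsto n+1-i$. If $\tau$ fixed some $c\in X$, then $n+1-a=a$ for every entry $a$, i.e.\ $a=(n+1)/2$ for all $a\in\{1,\ldots,n\}$; this is absurd as soon as the code uses at least two distinct symbols, which it does for $n\ge 2$ (indeed, when $n$ is even no symbol is fixed by $\tau$ at all). Hence $\langle\tau\rangle$ acts freely, every orbit has size $2$, and $2\mid|X|$.

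The orbit-counting bookkeeping is routine; the step requiring genuine care is the freeness, and in particular confirming that $\sigma$ and $\tau$ really do permute $X$ — that is, that moving the initial page and reversing the page direction produce \emph{bona fide} flat plumbing basket codes of the \emph{same} link $L$, with the front-to-back normalization $i>j$ restored after relabelling. I expect the main obstacle to be exactly this compatibility of the relabelling permutations with the normalization convention, rather than the elementary fixed-point analysis above; once the two actions are seen to be well defined on $X$, freeness reduces to the single combinatorial observation that every symbol appears, so that any fixing relabelling must already be the identity on $\{1,\ldots,n\}$.
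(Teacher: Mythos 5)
The paper offers no argument for Theorem~\ref{equivalent} beyond the two preceding ``observations'' about $\sigma$ and $\tau$, so your orbit-counting skeleton is the natural way to try to make it precise; the trouble is that the freeness step, which you correctly single out as the crux, does not survive contact with the rest of the paper. You take the action of $\sigma$ (resp.\ $\tau$) to be entry-wise relabelling of a linear $2n$-tuple, with ``fixed'' meaning equal as tuples. Under that reading your fixed-point analysis is internally fine, but it proves too much: the identical argument shows that \emph{every} nontrivial element of the dihedral group $\langle\sigma,\tau\rangle$ of order $2n$ acts freely (any nontrivial permutation of $\{1,\dots,n\}$ moves some symbol, and every symbol occurs in a code), so the number of codes would be divisible by $2n$. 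Table~\ref{t111} contradicts this: the count for $3_1$ is $20{,}274\equiv 6\pmod{12}$, and the paper itself attributes the failure of divisibility by $12$ to the fact that ``some of the flat plumbing basket codes are invariant by the action of $\sigma$ or $\tau$.'' Hence the actions the paper intends cannot be the pure entry-wise ones you use. The reading that makes the paper's data consistent is that the codes are circular (the paper later treats them as ``circular shapes''), and/or that reversing the page direction also reverses the reading direction along the disc, so that $\tau$ sends $c_p$ to $n+1-c_{2n+1-p}$. With either convention fixed codes genuinely exist: $12341234$ for the trefoil is fixed by that $\tau$, and as a circular word it is fixed by $\sigma$ as well. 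Once fixed points are allowed, orbit counting only gives ``each orbit size divides $n$ (resp.\ $2$),'' which yields no divisibility of the total; part (2) in particular extracts no parity information from an involution with fixed points.

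Two further symptoms that the argument as written cannot be the whole story: your own aside that part (2) fails for $n=1$ (the unique code $(1,1)$ of the two-component unlink gives a count of $1$, yet the theorem is asserted for all positive $n$), and the fact that the well-definedness of the actions---precisely the point you defer to the preceding paragraph---is also what determines whether the action is entry-wise or entry-wise composed with a rotation or reversal of positions, i.e.\ whether your freeness computation is even about the right action. To repair the proof you would need to (i) pin down the action in a form consistent with Table~\ref{t111}, (ii) for part (1) verify that the $\langle\sigma\rangle$-action in that corrected model is still free, and (iii) for part (2) give an argument that tolerates $\tau$-fixed codes, for instance deducing $2\mid|X|$ from $n\mid|X|$ when $n$ is even, or exhibiting some other free involution pairing off the $\tau$-fixed codes.
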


Now we will provide some classification theorems of knots and links by
the flat plumbing basket numbers. Without using a computer program, the third author was able to prove
the following classification theorem of links of the flat plumbing basket number $0, 1, 2, 3$ and $4$ by using
slightly different presentations of the flat plumbing basket surfaces, known as \emph{permutation presentations}.

\begin{theorem} (\cite{Kim:link}) \label{linkclassupto4}
\begin{enumerate}
\item[{\rm (1)}]  A link $L$ has the flat plumbing basket number $0$ if and only if $L$ is the trivial knot.

\item[{\rm (2)}]  A link $L$ has the flat plumbing basket number $1$ if and only if $L$ is the trivial link of two components.

\item[{\rm (3)}]  A link $L$ has the flat plumbing basket number $2$ if and only if $L$ is the trivial link of three components.

\item[{\rm (4)}]  A link $L$ has the flat plumbing basket number $3$ if and only if $L$ is either the trivial link of four components or
the Hopf link which is denoted by $L2a1$.

\item[{\rm (5)}] A link $L$ has the flat plumbing basket number $4$ if and only if $K$ is
either the trefoil knot, the figure eight knot, $L2a1 \sqcup O$, $L2a1 \# L2a1$, $L6a5$, or the trivial link of five components.
\end{enumerate}
where $\sqcup$ presents the disjoint union.
\end{theorem}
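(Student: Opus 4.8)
The plan is to convert the geometry into the finite combinatorics of flat plumbing basket codes and then organize the whole classification around the Euler characteristic of the basket surface. First I would record the numerical backbone. A flat plumbing basket surface $\F$ with $k$ bands is built from a disc by $k$ plumbings of flat annuli $A_0$, and each plumbing lowers the Euler characteristic by one, so $\chi(\F)=1-k$. Since $\F$ is connected, if $L=\partial\F$ has $b$ components and $\F$ has genus $g$ then $1-k=2-2g-b$, that is
\[
k = 2g + b - 1 .
\]
Hence $fpbk(L)\ge b-1$, a knot ($b=1$) forces $k$ to be even, and for each fixed $k$ only finitely many profiles $(g,b)$ can occur.

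Second, I would exploit the flatness of the bands through the Seifert form. Because each band is an untwisted annulus $A_0$, its core has zero framing, so in the natural basis of $H_1(\F)\cong\mathbb{Z}^{k}$ the Seifert matrix $V$ has zero diagonal and, as noted in the introduction, all entries in $\{0,\pm1\}$. This converts lower bounds into arithmetic: the determinant $|\det(V+V^{T})|$ and the Alexander polynomial $\det(V-tV^{T})$ are read off directly from the code. For example, for a genus one knot surface ($k=2$) the matrix $V+V^{T}$ is off-diagonal symmetric, so $|\det(V+V^{T})|=(v_{12}+v_{21})^{2}\in\{0,1,4\}$; since the trefoil and figure eight have determinants $3$ and $5$, neither bounds such a surface, and the admissible $k=2$ knot codes all yield the unknot. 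This is the uniform mechanism by which I would establish minimality throughout: the profile equation together with the zero diagonal obstruction rules out every link of smaller band number (for instance $L2a1$ cannot occur for $k\le2$, since a single flat annulus has linking number $0$ and no $k=2$ surface has two boundary components).

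The body of the argument is a finite enumeration carried out in the permutation presentation. For each $k\le4$ I would list the codes, a $2k$-tuple in which every band label occurs twice, up to the symmetries of Theorem~\ref{equivalent} (cyclic reindexing by $\sigma$ and reversal by $\tau$) and relabeling of bands, and stratify by the profiles $(g,b)$ allowed by $k=2g+b-1$. For $k=0,1,2,3$ the surviving codes are immediate and give the trivial knot, the two and three component unlinks, and, at $k=3$ where $(g,b)$ is $(0,4)$ or $(1,2)$, the four component unlink together with the Hopf link $L2a1$, the latter arising precisely from the genus one two component codes of linking number $\pm1$. For $k=4$ the profiles are $(g,b)=(2,1),(1,3),(0,5)$, producing respectively the genus two knots, the genus one three component links, and the five component unlink; reading the boundary off each code and computing linking numbers and the Alexander polynomial from the zero diagonal matrix identifies the trefoil and figure eight in the knot case and $L2a1\sqcup O$, $L2a1\# L2a1$, and $L6a5$ in the three component case. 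The realizing codes for the two knots are exactly those of Fig.~\ref{figure84band}.

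The hard part will be the $k=4$ stratum: bounding the number of chord diagrams after quotienting by the $\sigma,\tau$ action and band relabeling, and then correctly naming each boundary link, in particular separating the three genus one three component links $L2a1\sqcup O$, $L2a1\# L2a1$, and $L6a5$ from one another and proving the list is exhaustive. Since a split or composite link has an additive, respectively multiplicative, Alexander polynomial with controlled linking numbers, I expect these three to be distinguished by the pair (total linking number, Alexander polynomial) extracted from the zero diagonal Seifert matrix, and the knot case by the determinant computation above; assembling these local identifications into a complete and irredundant list, while keeping the enumeration small enough to run by hand, is where the real effort lies.
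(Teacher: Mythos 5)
This theorem is not proved in the paper at all: it is imported verbatim from \cite{Kim:link}, and the only information the present paper gives about its proof is that it was done by hand, ``without using a computer program,'' via \emph{permutation presentations} of flat plumbing basket surfaces. So there is no in-paper argument to compare against line by line. That said, your plan is consistent with the route the paper attributes to that reference, and its numerical skeleton is sound: $\chi(\F)=1-k$ for a $k$-band basket, hence $k=2g+b-1$, the parity constraint $k\equiv b-1 \pmod 2$ (which the paper itself states just after the theorem), the zero-diagonal Seifert matrix with entries in $\{0,\pm1\}$ (which the paper credits to Carter's observation), and a finite enumeration of codes for $k\le 4$ stratified by the profile $(g,b)$. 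The determinant obstruction at $k=2$ and the linking-number argument excluding the Hopf link at $k\le 2$ are correct and give genuine lower bounds. The one substantive caveat is that the heart of the classification --- actually running the $k=3$ and $k=4$ enumerations, verifying that every surviving code bounds one of the named links, and in particular separating $L2a1\sqcup O$, $L2a1\# L2a1$ and $L6a5$ and proving no other genus-two knot or genus-one three-component link occurs --- is announced rather than executed; you acknowledge this, but as written the proposal is a credible strategy and a correct reduction to a finite check, not yet a proof. If you carry out that check (a few hundred codes after quotienting by the $\sigma,\tau$ symmetries of Theorem~\ref{equivalent} and band relabeling), the argument should close.
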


It is fairly easy to see
that the number of components of the link whose
flat plumbing basket number $n$ is always congruent to $n+1$ modulo $2$.
Thus, the flat plumbing basket number of a prime knot has to be an even integer.
From Theorem~\ref{linkclassupto4}, one can easily obtain the following corollary
which addresses the classification of all prime knots of the flat plumbing basket number $0$,
$2$ and $4$.

\begin{corollary} \label{class4}
\begin{enumerate}
\item[{\rm (1)}]  A prime knot $K$ has the flat plumbing basket number $0$ if and only if $L$ is the trivial knot.

\item[{\rm (2)}]  There does not exist a prime knot $K$ whose flat plumbing basket number $2$.

\item[{\rm (3)}]  A knot $K$ has the flat plumbing basket number $4$ if and only if $K$ is either the trefoil knot
or the figure eight knot.
\end{enumerate}
\end{corollary}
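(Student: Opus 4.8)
The plan is to deduce all three clauses directly from the full link classification of Theorem~\ref{linkclassupto4} by discarding every entry that is not a single-component link; no new surface-theoretic argument is needed, which is exactly why the statement can be labelled a corollary. The one auxiliary fact I would invoke first is the parity observation already recorded just before the statement: the number of components of a link of flat plumbing basket number $n$ is congruent to $n+1$ modulo $2$, so every knot has \emph{even} flat plumbing basket number. This explains at the outset why only the values $0$, $2$ and $4$ can occur for a prime knot and why those are the only cases we must examine.

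For part (1), Theorem~\ref{linkclassupto4}(1) states that the only link of any kind with flat plumbing basket number $0$ is the trivial knot; since this is a one-component link, intersecting the (one-element) list with the class of prime knots leaves exactly the trivial knot, under the convention that the unknot counts as a prime knot. For part (2), Theorem~\ref{linkclassupto4}(3) asserts that the only link with flat plumbing basket number $2$ is the trivial link of three components, which has three components and so is not a knot at all; hence no knot, in particular no prime knot, attains the value $2$. Both of these are immediate component-counting restrictions.

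For part (3), I would take the six links in Theorem~\ref{linkclassupto4}(5) as precisely those of flat plumbing basket number $4$ and sort them by number of components: the trefoil and the figure eight are the two one-component entries, whereas $L2a1\sqcup O$, $L2a1\# L2a1$, $L6a5$ and the trivial link of five components each have more than one component. Consequently the only knots of flat plumbing basket number $4$ are the trefoil and the figure eight, and since both are prime this simultaneously rules out any composite knot with this value, so the equivalence holds for all knots and not merely prime ones.

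The only place calling for any care — and the closest thing to an obstacle — is the bookkeeping in part (3): one must correctly determine the component counts of the multi-component entries (that $L2a1\sqcup O$ and $L6a5$ are three-component links, that $L2a1\# L2a1$ is genuinely not a knot, and that the five-component trivial link is excluded) and settle the convention that admits the unknot as a prime knot in part (1). Everything else is an immediate restriction of the already-established classification, so I expect the proof to be short.
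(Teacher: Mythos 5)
Your proposal is correct and matches the paper's own (essentially unwritten) argument: the paper derives Corollary~\ref{class4} directly from Theorem~\ref{linkclassupto4} together with the parity observation that a knot must have even flat plumbing basket number, exactly as you do by filtering each list by component count. The bookkeeping you flag (that $L2a1\sqcup O$, $L2a1\# L2a1$, $L6a5$ and the five-component trivial link all have more than one component) is the whole content, and you handle it correctly.
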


\begin{example} \label{exa2}
\begin{enumerate}
\item[{\rm (1)}] The flat plumbing basket number of the link $4_1^2$ is $5$.
\item[{\rm (2)}] The flat plumbing basket number of the knots $5_2$ is $6$.
\end{enumerate}
\end{example}
\begin{proof}
Flat plumbing basket surfaces of the link $4_1^2$ with five annuli are depicted in Fig.~\ref{412complete}.
By Theorem~\ref{linkclassupto4}, the link $4_1^2$ can not have the flat plumbing basket number less than $5$.
Therefore, the flat plumbing basket number of the link $4_1^2$ must be $5$.

A flat plumbing basket surface of the link $5_2$ with six annuli are depicted in Fig.~\ref{52complete}
and its flat plumbing basket code is given $(1$, $2$, $3$, $4$, $5$, $6$, $4$, $5$, $1$, $2$, $3$, $6)$ in Example~\ref{exa1}.
By Corollary~\ref{class4}, the knot $5_2$ can not have the flat plumbing basket number less than $6$.
\end{proof}

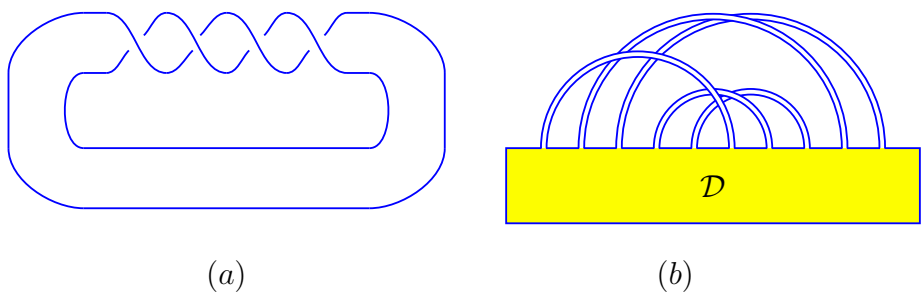
\begin{figure}
$$
\begin{pspicture}[shift=-1.2](-.2,-1.9)(6,2)
\psline(0,0)(0,1)
\pccurve[angleA=90,angleB=180](0,1)(1,1.8)
\psline(1,1.8)(1.3,1.8)
\pccurve[angleA=0,angleB=180](1.3,1.8)(2.1,1)
\pccurve[angleA=0,angleB=-135](2.1,1)(2.45,1.3)
\pccurve[angleA=45,angleB=180](2.55,1.5)(2.9,1.8)
\pccurve[angleA=0,angleB=180](2.9,1.8)(3.7,1)
\pccurve[angleA=0,angleB=-135](3.7,1)(4,1.3)
\pccurve[angleA=45,angleB=180](4.2,1.5)(4.5,1.8)
\psline(4.5,1.8)(4.8,1.8)
\pccurve[angleA=0,angleB=90](4.8,1.8)(5.8,1)
\psline(5.8,1)(5.8,0)
\pccurve[angleA=-90,angleB=0](5.8,0)(4.8,-.8)
\psline(4.8,-.8)(1,-.8)
\pccurve[angleA=180,angleB=-90](1,-.8)(0,0)
\pccurve[angleA=180,angleB=180](1,0)(1,1)
\psline(1,1)(1.3,1)
\pccurve[angleA=0,angleB=-135](1.3,1)(1.6,1.3)
\pccurve[angleA=45,angleB=180](1.8,1.5)(2.1,1.8)
\pccurve[angleA=0,angleB=180](2.1,1.8)(2.9,1)
\pccurve[angleA=0,angleB=-135](2.9,1)(3.2,1.3)
\pccurve[angleA=45,angleB=180](3.4,1.5)(3.7,1.8)
\pccurve[angleA=0,angleB=180](3.7,1.8)(4.5,1)
\psline(4.5,1)(4.8,1)
\pccurve[angleA=0,angleB=0](4.8,1)(4.8,0)
\psline(4.8,0)(1,0)
\rput(2.9,-1.7){{$(a)$}}
\end{pspicture} \quad
\begin{pspicture}[shift=-1.2](-.7,-1.9)(5.2,2)
\psarc[doubleline=true](2.75,0){.75}{-5}{185}
\psarc[doubleline=true](2.25,0){.75}{-5}{185}
\psarc[doubleline=true](2.75,0){1.75}{-5}{185}
\psarc[doubleline=true](2.25,0){1.75}{-5}{185}
\psarc[doubleline=true](1.25,0){1.25}{-5}{185}
\psframe[linecolor=lightgray,fillstyle=solid,fillcolor=lightgray](-.5,-1)(5,0)
\psline(-.03,0)(-.5,0)(-.5,-1)(5,-1)(5,0)(4.53,0)
\psline(.03,0)(.47,0) \psline(.53,0)(.97,0) \psline(1.03,0)(1.47,0)
\psline(1.53,0)(1.97,0) \psline(2.03,0)(2.47,0) \psline(2.53,0)(2.97,0)
\psline(3.03,0)(3.47,0) \psline(3.53,0)(3.97,0) \psline(4.03,0)(4.47,0)
\rput(2.25,-.5){{$\mathcal{D}$}}
\rput(1.75,-1.7){{$(b)$}}
\end{pspicture}
$$
%\vspace*{8pt}
\caption{$(a)$ The link $4_1^2$, $(b)$ a flat plumbing basket surface of the link $4_1^2$ with $5$ flat plumbings.} \label{412complete}
\end{figure}

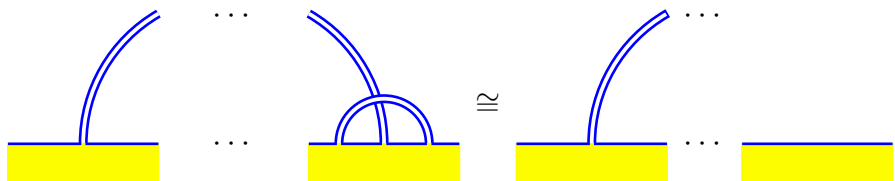
\begin{figure}
$$
\begin{pspicture}[shift=-1.2](-3.1,-.8)(3.1,2.2)
\psarc[doubleline=true, linewidth=1pt](0,0){2}{-10}{60}
\psarc[doubleline=true, linewidth=1pt](0,0){2}{120}{190}
\psarc[doubleline=true, linewidth=1pt](2,0){.6}{-10}{190}
\pspolygon[linecolor=lightgray, fillcolor=lightgray, fillstyle=solid](-1,0)(-3,0)(-3,-.5)(-1,-.5)(-1,0)
\pspolygon[linecolor=lightgray, fillcolor=lightgray, fillstyle=solid](1,0)(3,0)(3,-.5)(1,-.5)(1,0)
\psline[linewidth=1pt](-3,0)(-2.025,0)
\psline[linewidth=1pt](-1.97,0)(-1,0)
\psline[linewidth=1pt](1,0)(1.373,0)
\psline[linewidth=1pt](1.428,0)(1.972,0)
\psline[linewidth=1pt](2.03,0)(2.57,0)
\psline[linewidth=1pt](2.63,0)(3,0)
\rput(0,1.7){{$\cdots$}}
\rput(0,0){{$\cdots$}}
\end{pspicture}
 \cong
\begin{pspicture}[shift=-1.2](-3.1,-.8)(2.1,2.2)
\psarc[doubleline=true, linewidth=1pt](0,0){2}{120}{190}
\psline(1.94;120)(2.06;120)
\pspolygon[linecolor=lightgray, fillcolor=lightgray, fillstyle=solid](0,0)(2,0)(2,-.5)(0,-.5)(0,0)
\pspolygon[linecolor=lightgray, fillcolor=lightgray, fillstyle=solid](-1,0)(-3,0)(-3,-.5)(-1,-.5)(-1,0)
\psline[linewidth=1pt](-3,0)(-2.025,0)
\psline[linewidth=1pt](-1.97,0)(-1,0)
\psline[linewidth=1pt](0,0)(2,0)
\rput(-.5,1.7){{$\cdots$}}
\rput(-.5,0){{$\cdots$}}
\end{pspicture}
$$
%\vspace*{8pt}
\caption{A Type I move which decreases the flat plumbing basket number by $2$.} \label{twobridge}
\end{figure}

\begin{table}

\begin{tabular}{c|c|c} \hline
Name~of~knot & A~flat~plumbing~basket~codes & $\begin{matrix} \rm{Number~of} \\ \rm{f.p.b. codes} \end{matrix}$ \\ \hline
 $3_1$ & $\begin{matrix} \rm{12341234} \\ \rm{123124356456} \end{matrix}$  & $20,274$ \\
 $4_1$ & $\begin{matrix} \rm{12431243} \\ \rm{123124365465} \end{matrix}$ & $32,442$ \\
 $5_1$ & 123456123456 & $12$ \\
 $5_2$ & 123124563456 & $4,176$ \\
 $6_1$ & 123124653465 & $17,982$ \\
 $6_2$ & 123461253465 & $1,368$  \\
 $6_3$ & 123456123654 & $1,908$ \\
 $7_6$ & 123456123564 & $432$ \\
 $7_7$ & 123456124365 & $1,404$ \\
 $8_{1}$ & 124631254635 & $576$ \\
 $8_{3}$ & 125314625463 & $288$ \\
 $8_{12}$ & 123456124653 & $576$  \\
 $8_{20}$ & 123612546534 & $1,440$ \\
 $8_{21}$ & 123461256345 & $144$ \\
 $9_{42}$ & 123614235465 & $720$ \\
 $9_{44}$ & 123615246534 & $1,152$ \\
 $9_{46}$ & 123614534625 & $1,296$ \\
 $9_{48}$ & 123456125634 & $24$  \\
 $10_{132}$ & 123612564534 & $144$ \\
 $10_{136}$ & 125631243564 & $144$ \\
 $10_{137}$ & 124615346253 & $288$ \\
 $10_{140}$ & 123615624534 & $144$ \\
 $11n_{38}$ & 126154635423 & $144$ \\
 $12n_{462}$ & 124361546253 & $144$ \\
 $13n_{973}$ & 126415364253 & $144$ \\
 $14n_{17954}$ & 135264135264 & $36$  \\
 $15n_{45460}$ & 124635124635 & $216$ \\
 $16n_{246032}$ & 136254136254 & $72$ \\
 composite & & $2,268$ \\
 unknot & 123124563564 & $105,162$ \\ \hline
\end{tabular}
\vskip .2cm
\caption{Knots of the flat plumbing basket surfaces with $6$ bands
out of $195,120$ cases.}\label{t111}
\end{table}

\begin{theorem} \label{6classification}
The prime knot $K$ has the flat plumbing basket number $6$
if and only if it is either $5_1$, $5_2$, $6_1$, $6_2$,
$6_3$, $7_6$, $7_7$, $8_1$, $8_3$, $8_{12}$, $8_{20}$, $8_{21}$,
$9_{42}$, $9_{44}$, $9_{46}$, $9_{48}$, $10_{132}$,
$10_{136}$, $10_{137}$, $10_{140}$, $11n_{38}$,
$12n_{462}$, $13n_{973}$, $14n_{17954}$, $15n_{45460}$ or $16n_{246032}$.
\end{theorem}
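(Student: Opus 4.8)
The plan is to reduce the statement to a finite, computer-assisted enumeration and to graft onto it the low-complexity classification already available from Corollary~\ref{class4}. The key reduction is this: a prime knot $K$ has $fpbk(K)=6$ if and only if $K$ bounds some flat plumbing basket surface with exactly six bands (equivalently, $K$ possesses a $12$-tuple flat plumbing basket code) while bounding none with fewer bands. The number of boundary components of a flat plumbing basket surface with $n$ bands is congruent to $n+1$ modulo $2$, so for a knot the only values below $6$ are $fpbk=0,2,4$; by Corollary~\ref{class4} the knots realizing these are exactly the unknot ($fpbk=0$) together with the trefoil $3_1$ and the figure-eight $4_1$ ($fpbk=4$), with no prime knot at $fpbk=2$. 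Hence it suffices to determine every prime knot that arises as the boundary of \emph{some} six-band flat plumbing basket surface, and then to delete the unknot, $3_1$, and $4_1$ from that list.

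First I would make the notion of a six-band code fully explicit: it is a $12$-tuple in which each label $1,\dots,6$ occurs exactly twice, read off along the disc $\mathcal{D}$ under the conventions of Example~\ref{exa1} (in particular, the rule that the band labeled $i$ lies in front of the band labeled $j$ whenever $i>j$). Generating all admissible such tuples, and keeping those whose boundary is a single knot, produces the $195{,}120$ cases tabulated in Table~\ref{t111}. Two tools organize and shrink this list: Theorem~\ref{equivalent}, since the cyclic shift $\sigma$ of the initial page and the page-reversal $\tau$ leave the underlying link unchanged, so that each knot class occurs with multiplicity divisible by $6$; and the Type~I move of Fig.~\ref{twobridge}, which lowers the band number by $2$ and thereby flags the codes whose knots already lie in the $fpbk\le 4$ list. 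I would then confirm that the per-knot multiplicities in the final column of Table~\ref{t111} sum to $195{,}120$.

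Next, for each code I would realize the corresponding flat plumbing basket surface as a knot diagram and read off its Dowker--Thistlethwaite code by the procedure of Section~\ref{prelim}, using the worked conversion of Fig.~\ref{figure84band}$(a)$ into an explicit DT sequence as the template. Passing each DT code to the \texttt{knotfinder} and \texttt{knotscape} routines of \texttt{Knotscape}~\cite{Thistlethwaite:knotscape} names the underlying knot and separates the prime, composite, and unknotted outcomes. Sorting the $195{,}120$ identifications by knot type yields Table~\ref{t111}, whose prime-knot rows are precisely $3_1$, $4_1$, and the twenty-six knots named in the statement.

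The conclusion then assembles the two directions. For the ``if'' part, every listed knot occurs in Table~\ref{t111} together with an explicit six-band code, so $fpbk\le 6$; since none of the twenty-six is the unknot, $3_1$, or $4_1$, Corollary~\ref{class4} forbids $fpbk\in\{0,2,4\}$, and as $fpbk$ is even for a prime knot we conclude $fpbk=6$. For the ``only if'' part, a prime knot with $fpbk=6$ bounds a six-band surface and so carries a $12$-tuple code, which must appear somewhere in the exhaustive enumeration; the only prime knots appearing are $3_1$, $4_1$, and the twenty-six listed, and $fpbk=6\ne 4$ eliminates $3_1$ and $4_1$. The principal obstacle is neither deep nor delicate but verificational: one must be certain that the generated family of $12$-tuples really is \emph{all} admissible six-band codes---so that no $fpbk=6$ knot escapes detection---and that the surface-to-DT-code conversion faithfully records each boundary; the appeal to \texttt{Knotscape} for the final identification is the single step that is checked by machine rather than argued by hand.
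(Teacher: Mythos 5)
Your proposal follows essentially the same route as the paper: an exhaustive enumeration of the $12$-tuple flat plumbing basket codes, pruning by the Type~I move and the symmetries of Theorem~\ref{equivalent}, conversion to DT-codes identified by \texttt{Knotscape}, and finally the parity argument together with Corollary~\ref{class4} to pin the value at exactly $6$. The only (harmless) discrepancy is bookkeeping: the $195{,}120$ codes are what remain \emph{after} discarding the $874{,}080$ knot codes admitting a Type~I move, not the full set of single-component codes, but your argument does account for that reduction.
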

\begin{proof}
To consider all flat plumbing basket surface with $6$ annuli,
we first count all possible flat plumbing basket codes of $\{ 1, 1, 2, 2, 3, 3, 4, 4, 5, 5, 6, 6\}$.
Since these are presented as circular shapes, we can fix the first elements to be $1$.
Thus, there are $\dfrac{11!}{2^6}=7,484,400$ many such flat plumbing basket codes.

The second author has written $C+$ program which determine whether a given flat plumbing basket code
produces a knot or a link with more than $1$ components, admits a Type I move as depicted in Fig.~\ref{twobridge}, finds
a DT-code of the given flat plumbing basket code and performs series of reductions which realize the Reidemeister
move type I and II. Using the program, we find that $6,415,200$ of them present links and
$1,069,200$ present knots. Among $1,069,200$ flat plumbing basket codes presenting knots, there are $874,080$ codes which
admit a Type I move, these knots must have the flat plumbing basket number less than $6$.
It leaves us $195,120$ flat plumbing basket codes. The computer
program ``knotfinder" of {\tt{Knotscape}} reduces these DT-codes
to the standard DT-codes and finds $105,162$ codes presents the unknot
and $2,268$ codes present composite knots.
All remaining $87,690$ reduced DT-codes of the flat plumbing basket codes are identified by the computer
program ``knotscape" of {\tt{Knotscape}}.
The results are listed as in Table~\ref{t111}.
\end{proof}

Let us remark that the $C+$ program is available at the first author's homepage :
{\tt{http://sec.pusan.ac.kr/?page$\b{\mbox{ }}$id=1303}}.
While using ``knotfinder" of {\tt{Knotscape}}, some of DT-codes of flat plumbing basket codes
were not identified since the program aborts by the time limit set up by the program
which may run infinitely because it repeatedly uses
Reidemeister moves on DT-code which may increase the length of DT-code.
However, we are lucky enough that at least one of
DT-codes of the flat plumbing basket codes which produce the same knot by
the action of $\sigma=\left(\begin{matrix} 1 & 2 & \ldots & n-1 &n \\
2 & 3 & \ldots & n & 1 \end{matrix} \right)$ as stated in Theorem~\ref{equivalent} $(1)$
makes an output DT-code. Further, because the length of these output DT-codes were less than
or equal to $16$ and all prime knots up to $16$ crossings are tabulated, we can find the exact knot names.
One can also observe that the number of the flat plumbing basket codes for a knot in Table~\ref{t111}
are all divisible by $12$ except $3_1$, $4_1$ and $6_1$ because some of flat plumbing basket codes
are invariant by the action of $\sigma$ or $\tau$.

Since the trefoil knot and the figure eight knot have the flat plumbing basket number $4$,
there are exactly $26$ prime knots whose flat plumbing basket numbers are exactly $6$.
The results in~\cite{HN} found the flat plumbing basket number of
prime knots up to $9$ crossings except $24$ knots.
Using Theorem~\ref{6classification}, we find the following corollary.

\begin{corollary} \label{cor9}
\begin{enumerate}
\item
The flat plumbing basket number of knots $7_2$, $7_4$ and $9_{45}$ is $8$.

\item
The flat plumbing basket number of knots $8_1$, $9_{44}$ is $6$.

\item
The flat plumbing basket number of knots $9_2$, $9_5$ and $9_{35}$ are either
$8$ or $10$.
\end{enumerate}
\end{corollary}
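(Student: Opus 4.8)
The plan is to derive Corollary~\ref{cor9} entirely from Theorem~\ref{6classification} together with the parity constraint on prime knots and the upper bounds already furnished by Hirose and Nakashima~\cite{HN}. First I would assemble the complete list of prime knots with small flat plumbing basket number. Recall from the remark preceding Corollary~\ref{class4} that the flat plumbing basket number of any prime knot is even. Combining Corollary~\ref{class4} with Theorem~\ref{6classification} then shows that the only prime knots $K$ with $fpbk(K)\le 6$ are the unknot ($fpbk=0$), the trefoil $3_1$ and the figure eight $4_1$ ($fpbk=4$), and the twenty-six knots recorded in Table~\ref{t111} ($fpbk=6$). This exhaustive list is the engine of the argument.

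The key consequence is a sharpened lower bound. If $K$ is a prime knot that is none of the unknot, $3_1$, $4_1$, or the twenty-six knots of Theorem~\ref{6classification}, then $fpbk(K)\notin\{0,2,4,6\}$, and since $fpbk(K)$ is even we obtain $fpbk(K)\ge 8$. By inspecting the list in Theorem~\ref{6classification} I would verify that none of $7_2$, $7_4$, $9_{45}$, $9_2$, $9_5$, $9_{35}$ occurs there; as each is a prime knot distinct from the unknot, $3_1$ and $4_1$, each therefore satisfies $fpbk\ge 8$. Part~(2) is immediate and requires no such argument: the knots $8_1$ and $9_{44}$ appear explicitly in Theorem~\ref{6classification}, with flat plumbing basket codes $124631254635$ and $123615246534$ listed in Table~\ref{t111}, so $fpbk(8_1)=fpbk(9_{44})=6$ at once.

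To close parts~(1) and~(3) I would invoke the upper bounds of~\cite{HN}. For $7_2$, $7_4$ and $9_{45}$ the constructions there give $fpbk\le 8$, which together with the lower bound $fpbk\ge 8$ just obtained forces $fpbk=8$, proving part~(1). For $9_2$, $9_5$ and $9_{35}$ the same source gives $fpbk\le 10$; combined with $fpbk\ge 8$ and evenness, this leaves exactly the two possibilities $fpbk\in\{8,10\}$, proving part~(3). Thus the single new ingredient supplied by this paper is the improved lower bound $fpbk\ge 8$, which deletes the value $6$ from each range that was left open in~\cite{HN}; the matching upper bounds $8$ and $10$ are precisely the ones whose corresponding Alexander-polynomial and genus lower bounds in~\cite{HN} were too weak to determine the exact value, explaining why these eight knots were among the $24$ undecided cases.

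The main obstacle is not in the lower bound, which is essentially a bookkeeping consequence of the classification, but in the inability to decide part~(3) completely. Distinguishing $8$ from $10$ for $9_2$, $9_5$ and $9_{35}$ would require a classification of prime knots with flat plumbing basket number exactly $8$, an enumeration well beyond the $195{,}120$-case analysis underlying Theorem~\ref{6classification}; absent that, the argument can only narrow the range to two values rather than pinpoint a single one, which is exactly what the statement asserts.
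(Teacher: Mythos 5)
Your proposal is correct and follows essentially the same route as the paper, which offers no explicit proof beyond the remark that the corollary follows from Theorem~\ref{6classification} applied to the $24$ knots left undetermined in~\cite{HN}: membership in the classification list settles $8_1$ and $9_{44}$ at $6$, while exclusion from the list (together with evenness and Corollary~\ref{class4}) forces $fpbk\ge 8$ for the remaining six knots, which is then combined with the upper bounds $8$ and $10$ from~\cite{HN}. Your write-up simply makes this implicit argument explicit, and does so accurately.
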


\section{Conclusion} \label{conclusion}

Authors already have all required DT-codes for the flat plumbing basket codes
of the flat plumbing basket number $8$ and $10$. But we are not able to find
a complete table like Table~\ref{t111}. There are two difficulties arise for
these cases 1) the computer program ``knotfinder" of {\tt{Knotscape}}
does not have a complete list of DT-codes for prime knots of more than $16$ crossings,
2) the number of DT-codes we have to deal with increases exponentially.
For example, a text.file contains all DT-codes for the flat plumbing basket codes
of the flat plumbing basket number $10$ is more than $2$-gigabytes already.

To solve these problems, authors are trying to write a new computer program
which identify whether two DT-notations are the same and revise ``knotfinder" of {\tt{Knotscape}}.

\section*{Acknowledgments}

The \TeX\, macro package PSTricks~\cite{PSTricks} was essential for
typesetting the equations and figures. The third author was supported by Basic Science Research Program through the
National Research Foundation of Korea(NRF) funded by the Ministry of Education, Science and Technology(2012R1A1A2006225).

\end{document}